\documentclass[pdflatex,sn-mathphys-num]{sn-jnl}

\usepackage{graphicx}%
\usepackage{multirow}%
\usepackage{amsmath,amssymb,amsfonts}%
\usepackage{amsthm}%
\usepackage{mathrsfs}%
\usepackage[title]{appendix}%
\usepackage{xcolor}%
\usepackage{textcomp}%
\usepackage{manyfoot}%
\usepackage{booktabs}%
\usepackage{algorithm}%
\usepackage{algorithmicx}%
\usepackage{algpseudocode}%
\usepackage{listings}%
\usepackage{subcaption}
\usepackage{textcomp}
\usepackage{arydshln}
\usepackage{tikz}
\usepackage{pgfplots}
\pgfplotsset{compat = newest}
\usetikzlibrary{calc}
\usetikzlibrary{patterns.meta}
\usetikzlibrary{decorations.pathmorphing,patterns}
\usetikzlibrary{math}
\usetikzlibrary{cd}

\theoremstyle{thmstyleone}%
\newtheorem{theorem}{Theorem}
\newtheorem{proposition}[theorem]{Proposition}%
\newtheorem{lemma}[theorem]{Lemma}
\newtheorem{corollary}[theorem]{Corollary}

\theoremstyle{thmstyletwo}%
\newtheorem{example}{Example}%
\newtheorem{remark}{Remark}%

\theoremstyle{thmstylethree}%
\newtheorem{definition}{Definition}%
\newtheorem{ass}{Assumption}

\raggedbottom

\begin{document}

\title[Hybrid OC]{Dynamics and Optimal Control of State-Triggered Affine Hybrid Systems}

\author*[1]{\fnm{William} \sur{Clark}}\email{clarkw3@ohio.edu}

\affil*[1]{\orgdiv{Department of Mathematics}, \orgname{Ohio University}, \orgaddress{\street{24 Race St}, \city{Athens}, \postcode{45701}, \state{Ohio}, \country{USA}}}

\abstract{
A study of the dynamics and control for linear and affine hybrid systems subjected to either temporally- or spatially-triggered resets is presented. Hybrid trajectories are capable of degeneracies not found in continuous-time systems namely beating, blocking, and Zeno. These pathologies are commonly avoided by enforcing a lower bound on the time between events. While this constraint is straightforward to implement for temporally-triggered resets, it is impossible to do so for spatially-triggered systems. In particular, linear spatially-triggered hybrid systems always posses trajectories that are beating and blocking while affine systems may also include Zeno trajectories.

The hybrid Pontryagin maximum principle is studied in the context of affine hybrid systems. The existence/uniqueness of the induced co-state jump conditions is studied which introduces the notion of strongly and weakly actuated resets. Finally, optimal control in the context of beating and Zeno is discussed. This work concludes with numerical examples.
}

\keywords{Hybrid systems, Optimal control, Linear quadratic regulator}

\maketitle

\section{Introduction}\label{sec:intro}
The linear quadratic regulator (LQR) is the canonical introduction to optimal control theory and was among the earliest problems in this field \cite{kalman1960contributions}. A reason why it is so ubiquitous is that it is both sufficiently complicated to model many practical problems while being simple enough that a reasonably complete theory has been developed, e.g. \cite{jurdjevic}. The problem in question has the form
\begin{equation}\label{eq:master_cost}
	\begin{split}
		\min_u \, &\frac{1}{2}\int_{t_0}^{t_f} \, \left( x^\top Q x + u^\top Ru + 2x^\top N u\right) \, dt \\ &\hspace{1.3in} + \frac{1}{2}x^\top(t_f)Fx(t_f),
	\end{split}
\end{equation}
such that $x(t)$ follows the linear control dynamics
\begin{equation}\label{eq:cont_linear}
	\dot{x} = Ax + Bu,
\end{equation}
where $x\in\mathbb{R}^n$ is the state and $u\in\mathbb{R}^m$ is the input control. 
The matrices $R$ and $F$ are positive-definite, and $Q$ is positive-semidefinite. A standard approach to solving this problem is by integrating a matrix Riccati equation backwards in time.

If the underlying control system were nonlinear, iterative LQR can be utilized by linearizing about a reference trajectory and updating accordingly \cite{Li2004IterativeLQ}. However, when the state is allowed to jump, linearization becomes difficult. A distinguished class of such systems are hybrid systems \cite{hybrid_csm}. For the purposes of this work, we will restrict attention to simple hybrid systems \cite{simple_hybrid}, which have the form
\begin{equation*}
	\begin{cases}
		\dot{x} = f(x,u), & (t,x)\not\in \Sigma, \\
		x^+ = \Delta(x^-), & (t,x)\in\Sigma,
	\end{cases}
\end{equation*}
where the super scripts denote the state immediately pre- and post-jump and $\Sigma\subset \mathbb{R}\times \mathbb{R}^n$ is some event surface. A linear hybrid system is one where the data above are all linear (this does not imply that the flow is linear in the initial conditions). These control systems have the form
\begin{equation}\label{eq:hybrid_cont_linear}
	\begin{cases}
		\dot{x} = Ax + Bu, & (t,x)\not\in \Sigma, \\
		x^+ = Cx, & (t,x)\in \Sigma,
	\end{cases}
\end{equation}
where $\Sigma\subset\mathbb{R}\times\mathbb{R}^n$. This begs the question: what happens to the linear quadratic regulator when the dynamics \eqref{eq:cont_linear} are replaced by \eqref{eq:hybrid_cont_linear}?

The Pontraygin maximum principle (which is a way to provide the Riccati equation for LQR) remains applicable for hybrid systems (see, e.g.~\cite{DMITRUK2008964, liberzon, 4303244, gp_hnp} to name only a few). Moreover, the hybrid maximum principle has been studied in the linear case as well - the hybrid linear quadratic regulator (hLQR) \cite{periodic_hLQR, hLQR_1999} and iterative hLQR for nonlinear systems \cite{ihLQR_council}, among others. 

An important caveat is that the hybrid maximum principle is only applicable when the trajectories are not \textit{Zeno} \cite{zeno_pmp}, i.e. the jumps are all uniformly separated in time (equivalently, the dwell time is bounded below). A standard assumption to avoid Zeno is for $\overline{\Delta(\Sigma)}\cap\Sigma = \emptyset$, where the overline denotes the closure \cite{goodman_poincare, GRIZZLE20141955}. This intersection being nonempty is problematic as it allows for states to be ``stuck'' on the event surface.
This is easy enough to enforce for temporally triggered systems by having $\mathcal{T} = \{t_i\}\subset\mathbb{R}$ be a uniformly separated discrete subset. Unfortunately, this is not (and, indeed, cannot be) guaranteed for the case of spatially-triggered systems. Clearly,
\begin{equation}\label{eq:intersection_beating}
	0 \in \underbrace{\left\{ x\in\mathbb{R}^n : \lambda^\top x = 0 \right\}}_{\Sigma} \cap \underbrace{\left\{ Cx\in\mathbb{R}^n : \lambda^\top x = 0 \right\}}_{\Delta(\Sigma)} \ne \emptyset,
\end{equation}
as both of these sets are linear subspaces. Moreover, as both of these spaces are $(n-1)$-dimensional subspaces (assuming that the matrix $C$ is non-degenerate), their intersection is (at least) $(n-2)$-dimensional. By its very nature, the system always has beating/blocking. A trajectory is beating (resp. blocking) if it encounters the event surface a finite (resp. infinite) number of times in zero time.
In light of this, the hybrid maximum principle is not immediately applicable to spatially triggered systems. 

Fortunately, Zeno can be ruled out for a reasonable class of spatially triggered hybrid systems as shown in Theorem \ref{thm:no_zeno} in Section \ref{sssec:zeno} below. This essentially follows from the fact that the origin is a fixed point under both the continuous and jump dynamics -- such a point is not considered to be Zeno in \cite{stability_zeno}. For linear hybrid systems, the fixed point of the jump map and continuous dynamics are (assuming sufficient non-degeneracy)
\begin{equation*}
	\mathrm{Fix}_\Delta := \left\{ (t,x)\in\Sigma : \Delta(x) = x\right\} 
		\subset \mathrm{Fix}_f := \left\{ (t,x) : f(x) = 0\right\}.
\end{equation*}
Having the fixed points of the continuous and discrete dynamics coincide makes the class linear hybrid systems too restrictive to reasonably approximate many hybrid systems. As such, we will also consider the hybrid \textit{affine quadratic regulator} (AQR). The spatially triggered hybrid affine dynamics are given by
\begin{equation*}
	\begin{cases}
		\dot{x} = Ax + Bu + b, & (t,x)\not\in\Sigma, \\
		x^+ = Cx^-, & (t,x)\in\Sigma,
	\end{cases}
\end{equation*}
Unlike the linear case, affine systems can easily exhibit Zeno, e.g. the bouncing ball \cite{life_zeno}. 

Let us assume, for the moment, that Zeno does not appear in the solution to the spatially triggered hLQR/hAQR problem. The hybrid maximum principle states that the co-states must jump such that both the symplectic structure and the control Hamiltonian are preserved. If $p\in\mathbb{R}^n$ denotes the co-state, then this manifests as
\begin{equation}\label{eq:co-state_jump}
	\begin{split}
		C^\top p^+ &= p^- + \varepsilon\cdot \lambda, \\
		H^+ &= H^-,
	\end{split}
\end{equation}
where $H$ is the system's Hamiltonian (as constructed by the maximum principle) and $\varepsilon$ is a multiplier to enforce its conservation. As the Hamiltonian is typically quadratic in the co-states, we typically expect either two or zero solutions for the multiplier in \eqref{eq:co-state_jump}.

While \eqref{eq:co-state_jump} is well-known, a careful study of its applicability and analysis of its solutions is lacking. For reasons discussed above (beating/blocking/Zeno and existence/uniqueness), it is not even clear whether or not this relation makes sense for linear/affine hybrid systems.
The contributions of this work are the following:
\begin{itemize}
	\item Study the dynamical properties of linear/affine hybrid systems and gain an understanding of their ``exceptional'' solutions. In particular, it shown that beating is unavoidable, blocking can be minimized, and Zeno cannot occur in linear hybrid systems. Although the existence of beating/blocking remains unchanged for affine hybrid systens, Zeno may now appear.
	\item Show that temporally- and spatially-triggered control systems are qualitatively distinct. The affine hybrid quadratic regulator for the temporally-triggered case can be cast as a ``periodic affine Riccati equation'' while this does \textit{not} hold for spatially-triggered systems.
	\item Study the existence/uniqueness of solutions to \eqref{eq:co-state_jump} for spatially triggered systems. A natural dichotomy of control systems appear when determining uniqueness to \eqref{eq:co-state_jump} - weakly and strongly actuated resets. 
	\item Extend \eqref{eq:co-state_jump} to the case when beating occurs.
	\item Demonstrate the importance of Zeno in optimal trajectories.
\end{itemize}

The layout of this paper is as follows: Preliminaries for hybrid dynamical systems along with their solution concept is reviewed in \S\ref{sec:prelim}. Section \ref{sec:LHDS} focuses on the dynamics of linear hybrid systems and the existence of exceptional solutions. Proposition \ref{prop:triv_blocking} provides verifiable conditions to rule out blocking, Theorem \ref{thm:no_zeno} shows that Zeno does not occur in linear hybrid systems. Section \ref{sec:affine} extends the analysis of the previous section to include affine hybrid systems. Theorems \ref{thm:typeI_Zeno} and \ref{thm:typeII_Zeno} present two qualitatively distinct cases of Zeno for affine hybrid systems. Section \ref{sec:hLQR} introduces hybrid control systems, the hybrid affine quadratic regulator, and the specialization of the hybrid Pontryagin maximum principle to these systems. 
Section \ref{sec:complications} examines the complications of both existence/uniqueness and that of exceptional solutions for spatially-triggered resets, and introduces the concepts of weakly and strongly actuated resets. Furthermore, it is demonstrated that for some affine systems, non-singular solutions do not exist.
Numerical examples for both temporally and spatially triggered systems are demonstrated in Section \ref{sec:examples}. Finally, conclusions are discussed in Section \ref{sec:conclusion}.
\section{Preliminaries on Hybrid Dynamical Systems}\label{sec:prelim}
As control systems can be viewed as a generalization of dynamical systems, it will be beneficial to review some aspects of hybrid dynamical systems as the definition of a hybrid trajectory is more nuanced than a continuous-time trajectory. Moreover, these trajectories are capable of exhibiting unique pathological properties.

Throughout this section, a hybrid dynamical system will have the form
\begin{equation}\label{eq:gen_hybrid}
	\begin{cases}
		\dot{x} = f(x), & x\not\in \Sigma \\
		x^+ = \Delta(x), & x\in \Sigma,
	\end{cases}
\end{equation}
where the superscript, $x^-$, will be repressed and
\begin{enumerate}
	\item $x\in M$ where $M$ is a manifold called the \textit{state-space},
	\item $\Sigma\hookrightarrow M$ is an embedded co-dimension 1 submanifold called the \textit{guard},
	\item $f:M\to TM$ is a smooth vector field, and
	\item $\Delta:\Sigma\to M$ is a smooth map such that $\Delta(\Sigma)\hookrightarrow M$ is a smooth embedded submanifold called the \textit{reset}.
\end{enumerate}
Although the smoothness assumption on the above data can be greatly relaxed, to e.g. Lipschitz, the systems considered here will all be smooth.
Two special cases of the above will be studied in this work: linear and affine hybrid systems. 
A hybrid dynamical system will be referred to as \textit{linear} if its data is all linear, i.e.
\begin{equation}\label{eq:linear_hybrid}
	\begin{cases}
		\dot{x} = Ax, & \lambda^\top x \ne 0 \\
		x^+ = Cx, & \lambda^\top x = 0,
	\end{cases}
\end{equation}
which means that the continuous dynamics are linear, the guard is a linear subspace, and the reset is a linear map. As a slight generalization, an affine hybrid system will be given by
\begin{equation}\label{eq:affine_hybrid}
	\begin{cases}
		\dot{x} = Ax + b, & x\in\Sigma \\
		x^+ = Cx, & x\not\in\Sigma,
	\end{cases}
\end{equation}
where $\Sigma$ is an affine guard, see Definition \ref{def:aguard} below. 
For these systems, $A$ and $C$ are $n\times n$ matrices, and $b$ and $\lambda$ are $n\times 1$ column vectors.
\begin{remark}
	Note that the space of solutions to a linear hybrid system need not form a linear space. As such, the term ``hybrid system with linear data'' is more accurate. 
\end{remark}
To define a solution to \eqref{eq:gen_hybrid}, the notation of a hybrid time domain is needed, see \cite{hybrid_gst}.

\begin{definition}[Hybrid time domain]
	A subset $E\subset \mathbb{R}\times \mathbb{N}$ is a compact hybrid time domain if
	\begin{equation*}
		E = \bigcup_{j=1}^{J-1} \, [t_j,t_{j+1}] \times \{j\},
	\end{equation*}
	for some finite sequence of times $t_0\leq t_1 \leq \ldots\leq t_J = t_f$. A set $E$ is a hybrid time domain if for all $(T,J)\in E$, $E\cap [0,T] \times \{0,\ldots,J\}$ is a compact hybrid time domain.
\end{definition}

\begin{definition}[Hybrid Arc]
	A function $\varphi:E\to M$ is a hybrid arc if $E$ is a hybrid time domain and for each $j\in\mathbb{N}$, the function $\varphi_j:[t_j,t_{j+1}]\to M$ is locally absolutely continuous.
\end{definition}
Two useful functions for probing hybrid time domains/arcs are the projections onto the time components:
\begin{equation*}
	\begin{tikzcd}
		& E \subset \mathbb{R} \times \mathbb{N} \arrow[dl, "\pi_1"] \arrow[dr, "\pi_2"]& \\
		\mathbb{R} & & \mathbb{N}
	\end{tikzcd}
\end{equation*}
We can now state what it means to be a solution to a hybrid system.
\begin{definition}[Solution]\label{def:solution}
	A hybrid arc $\varphi:E\to M$ is a solution to the hybrid system \eqref{eq:gen_hybrid} if 
	\begin{enumerate}
		\item for all $j\in\mathbb{N}$ such that $I^j = \pi_1\circ\pi_2^{-1}(j)$ has nonempty interior,
		\begin{equation*}
			\begin{gathered}
				\varphi(t,j) \not\in\Sigma, \quad t\in \mathrm{int}(I^j) \\
				\frac{d}{dt}\varphi(t,j) = f\left( \varphi(t,j)\right), \quad t\in I^j \ a.e.
			\end{gathered}
		\end{equation*}
		\item if $(t,j), (t,j+1)\in E$, then
		\begin{equation*}
			\varphi(t,j)\in\Sigma, \quad \varphi(t,j+1) = \Delta(\varphi(t,j)).
		\end{equation*}
	\end{enumerate}
\end{definition}
Hybrid arcs can be qualitatively pathological in ways that continuous arcs cannot be. Three such qualities are listed below and constitute ``exceptional hybrid arcs.''
\begin{definition}[Exceptional Arcs]\label{def:singular_arcs}
	Let $\varphi:E\to M$ be a solution arc to \eqref{eq:gen_hybrid}. The hybrid arc is called	
	\begin{enumerate}
		\item $k$-beating at time $t$ if $\pi_2\circ\pi_1^{-1}(t) = \{j,j+1,\ldots,j+k+1\}$,
		\item blocking at time $t$ if $\pi_2\circ\pi_1^{-1}(t) = \{j, j+1,\ldots\}$, and
		\item Zeno if $\pi_1(E)$ is bounded, $\pi_2(E)$ is unbounded, and $\pi_1\circ\pi_2^{-1}(j)$ has nonempty interior for infinitely many $j$'s. 
	\end{enumerate}
\end{definition}
The index being offset by one for $k$-beating is to disqualify single resets from qualifying as 1-beating. 
Requiring that the intervals $\pi_1\circ\pi_2^{-1}(j)$ have nonempty interior is to differentiate between blocking and Zeno - or chattering Zeno and genuinely Zeno in the language of \cite{chattering}.
\begin{definition}[Zeno Points]\label{def:zeno_point}
	A point $x_z$ is a \textit{Zeno point} for the hybrid system \eqref{eq:gen_hybrid} if there exists a Zeno hybrid arc $\varphi:E\to M$ and a sequence $t_j\in \pi_1\circ\pi_2^{-1}(j)$ such that
	\begin{equation*}
		\lim_{j\to\infty} \, \varphi(t_j,j) = x_z.
	\end{equation*}
	The collection of all Zeno points is called the Zeno set and is denoted by $\mathcal{Z}$.
\end{definition}

If a solution is neither beating nor blocking, replacing the hybrid time domain by its projection, $\pi_1(E)\subset\mathbb{R}$, introduces isolated points of ambiguity and can be resolved by having the arc be left/right-continuous.
\subsection{Continuation of Solutions}
Of the three types of exceptional arcs defined in Definition \ref{def:singular_arcs}, blocking and Zeno arcs have $\pi_1(E)$ bounded above by the blocking/Zeno time. Many natural and physical systems have solutions of these types, e.g. the bouncing ball - see Example \ref{ex:soZ} below. The origin is the Zeno point of this system and corresponds to the ball coming to rest on the table in finite time. Obviously once the ball comes to a rest, it remains still on the table. This presents a way to extend the solution past the Zeno time. A study of this procedure for mechanical systems is explored in \cite{life_zeno}.

For the purposes of this work, we will continue solutions past their termination point by utilizing extended hybrid time domains as introduced in \cite{extend_zeno}.
\begin{definition}[Extended hybrid time domain]\label{def:extended}
	An extended hybrid time domain is a subset $\tilde{E}\subset \mathbb{R}^+ \times \mathbb{N}\times\mathbb{N}$ of the form
	\begin{equation*}
		\tilde{E} = \bigcup_{j,k} \, [t_{j,k}, t_{j+1,k}] \times \{j\} \times\{k\},
	\end{equation*}
	such that
	\begin{enumerate}
		\item it is a concatenation of hybrid time domains, i.e. for $k\in\mathbb{N}$, the set 
		\begin{equation*}
			E_k := \left\{ (t,j) : (t,j,k)\in\tilde{E}\right\} \subset \mathbb{R}\times \mathbb{N}
		\end{equation*}
		is a (possibly empty) hybrid time domain, and
		\item the individual hybrid time domains are compatible, i.e.
		\begin{equation*}
			\lim_{j\to\infty} \, t_{j,k} = t_{1,k+1},
		\end{equation*}
		assuming the above limit makes sense.
	\end{enumerate}
\end{definition}
\begin{figure}
	\centering
	\begin{tikzpicture}
		\draw[->, thick] (0,0) -- (10,0);
		\draw[->, thick] (0,0) -- (0,6);
		\node[below] at (10,0) {$t\in\mathbb{R}$};
		\node[above left] at (0,6) {$j\in\mathbb{N}$};
		\foreach \i in {1,...,10}{
			\draw[thick] (-0.1,{5-4.5*pow(2,-\i+1)}) -- (0.1,{5-4.5*pow(2,-\i+1)});
			\draw[thick, blue] ({4.5-4*pow(2,-\i+1)},{5-4.5*pow(2,-\i+1)}) -- ({4.5-4*pow(2,-\i)},{5-4.5*pow(2,-\i+1)});
			\fill[radius=2pt, blue] ({4.5-4*pow(2,-\i)},{5-4.5*pow(2,-\i+1)}) circle;
		}
		\foreach \i in {1,2,3}{
			\node[left] at (0, {5-4.5*pow(2,-\i+1)}) {\i};
		}
		\node[left] at (0,5) {$\infty$};
		\draw[thick, red] (4.5,0.5) -- (6.5,0.5);
		\foreach \i in {1,...,10}{
			\fill[radius=2pt, red] (6.5, {5-4.5*pow(2,-\i+1)}) circle;
		}
		\draw[thick, teal] (6.5,0.5) -- (8,0.5);
		\fill[radius=2pt, teal] (8,0.5) circle;
		\draw[thick, teal] (8,2.75) -- (9,2.75);
		\draw[thick, dashed, teal] (9,2.75) -- (9.75,2.75);
		\node[blue] at (2,2) {$(t,j,1)$};
		\node[red] at (5.5,2) {$(t,j,2)$};
		\node[teal] at (8,2) {$(t,j,3)$};
	\end{tikzpicture}
	\caption{An extended hybrid time domain with three components. The first (blue) is Zeno and the second (red) is blocking.}
	\label{fig:extended_hybrid_time}
\end{figure}
A sample extended hybrid time domain is shown in Fig. \ref{fig:extended_hybrid_time}. The use of this method will be rather limited in this work and is primarily utilized for optimal control problems where the time horizon lies beyond the Zeno/blocking time, see \S\ref{subsec:Zeno_ex}.
\section{Dynamics of Linear Hybrid Systems}\label{sec:LHDS}
We begin our analysis on the (uncontrolled) dynamics of linear hybrid systems \eqref{eq:linear_hybrid}, in particular on the existence/exclusion of exceptional hybrid solutions to these systems. This is examined as a preliminary to hybrid control as a standard assumption in optimal control for hybrid systems is that the resulting hybrid arcs have a finite number of separated resets on finite time intervals \cite{gp_hnp}, e.g. for an arc $\varphi:E\to M$,
\begin{equation*}
	\pi_1(E) = [t_0,t_f], \quad \#\pi_2(E)<\infty, \quad \text{and} \quad \min_{j\in \pi_2(E)} \, \mathrm{length}(\pi_1\circ\pi_2^{-1}(j)) \geq \delta > 0.
\end{equation*}
As such, the three types of exceptional arcs introduce a level of inconvenience to the standard formulation off the hybrid Pontryagin maximum principle \cite{minimum_hybrid}. A standard technique to avoid these types of arcs is to have the reset move points away from the guard, i.e. $\overline{\Delta(\Sigma)}\cap\Sigma = \emptyset$ where the overline represents the closure. For spatially-triggered resets in linear hybrid systems \eqref{eq:linear_hybrid}, this intersection is generally a co-dimension 2 subspace \eqref{eq:intersection_beating}. Although this set is not empty, it does not immediately imply the presence of Zeno trajectories, only that some states are beating which is not fatal to the hybrid Pontryagin maximum principle. In fact, the main result of this section, Theorem \ref{thm:no_zeno}, states that Zeno is actually impossible for these types of systems.

For the remainder of this section, all hybrid systems will have the form of \eqref{eq:linear_hybrid}, i.e.
\begin{equation*}
	\begin{cases}
		\dot{x} = Ax, & \lambda^\top x\ne 0, \\
		x^+ = Cx, & \lambda^\top x = 0,
	\end{cases}
\end{equation*}
subject to the following assumption.
\begin{ass}
	The reset matrix, $C$, is invertible.
\end{ass}

\subsection{Beating and Blocking Sets}\label{subsec:bblocking}
The \textit{guard} for a linear hybrid system is denoted by
\begin{equation*}
	\Sigma = \{ x\in\mathbb{R}^n : \lambda^\top x = 0\}.
\end{equation*}
If $x\in\Sigma$, a reset occurs and the point gets mapped to $Cx$. If $Cx\in\Sigma$, then a second reset instantaneously occurs and the point moves to $C^2x$ which results in beating. This inspires the following definition.

\begin{definition}[Beating and blocking sets]
	Consider the nested subspaces:
	$$\Sigma_0 \supseteq \Sigma_1 \subseteq \Sigma_2 \subseteq \ldots,$$
	defined by
	\begin{equation*}
		\Sigma_k := \bigcap_{j=0}^k \, \left\{ x\in\mathbb{R}^n : \lambda^\top C^jx = 0\right\}.
	\end{equation*}
	The subspace $\Sigma_k$ is called the \textit{$k^{th}$-beating set}. The \textit{blocking set} is the subspace
	\begin{equation*}
		\Sigma_\infty := \bigcap_{k=0}^\infty \, \Sigma_k.
	\end{equation*}
\end{definition}
\begin{remark}
	The following is true about the beating/blocking sets.
	\begin{enumerate}
		\item If $x\in\Sigma_1$, then $x, Cx\in\Sigma$ and the point is (at least) 1-beating. Likewise, a point $x\in \Sigma_k$ is (at least) $k$-beating. 
		\item If $x\in\Sigma_\infty$, then $x\in\Sigma$ and for all $k$, $C^kx\in\Sigma$ and the trajectory is blocking.
		\item As the beating sets are all nested subspaces, they must terminate and there exists a finite $N$ such that $\Sigma_N = \Sigma_\infty$.
		\item All of these spaces are nonempty as $0\in\Sigma_\infty$.
	\end{enumerate}
\end{remark}
\begin{remark}
	If a trajectory is either blocking or Zeno to the origin, we will extend the solution via Definition \ref{def:extended} by setting $x(t>t^*,1,2) \equiv 0$ where $t^*$ is the blocking/Zeno time. This will be implicitly used in \S\ref{subsec:Zeno_ex} below.
\end{remark}
Allowing for beating, the ``true'' reset map for the dynamics \eqref{eq:linear_hybrid} is $\Delta:\Sigma\to\mathbb{R}^n$ given by
 \begin{equation*}
	\Delta(x) = \begin{cases}
		Cx, & x\in\Sigma_0\setminus\Sigma_1, \\
		C^2x, & x\in\Sigma_1\setminus\Sigma_2, \\ 
		C^3x, & x\in\Sigma_2\setminus\Sigma_3, \\
		\vdots
	\end{cases}
\end{equation*}
and will be referred to as the \textit{full reset map}. The purpose of this reformulation is to exclude exceptional arcs and to allow for solutions to be parameterized exclusively by their time component, $\pi_1(E)\subset\mathbb{R}$. 
Clearly, the reset map does not always have a clear extension to the blocking set, $\Sigma_\infty$, as $\lim_k C^kx$ need not be reasonable. This leads to the observation that \textit{for a general linear hybrid system, the full reset may not make sense on the entire guard.} However, the reset map will always have a clear extension to the origin (which always lies within the blocking set) by defining $\Delta(0) := 0$.
\begin{definition}[Trivially Blocking]
	The system \eqref{eq:linear_hybrid} is called \textit{trivially blocking} if $\Sigma_\infty = \{0\}$.
\end{definition}
When a system is trivially blocking, the full reset map can be extended to the entire guard via
 \begin{equation*}
	\Delta(x) = \begin{cases}
		Cx, & x\in\Sigma_0\setminus\Sigma_1, \\
		C^2x, & x\in\Sigma_1\setminus\Sigma_2, \\ 
		C^3x, & x\in\Sigma_2\setminus\Sigma_3, \\
		\vdots \\
		C^Nx, & x\in\Sigma_{N-1}\setminus \Sigma_k, \\
		0, & x\in\Sigma_k = \{0\},
	\end{cases}
\end{equation*}
where $N$ is the smallest integer such that $\Sigma_N = \Sigma_\infty$. Unfortunately, systems cannot be assumed to be trivially blocking as the beating sets are expected to have as large a dimension as possible.
\begin{proposition}\label{prop:size_beating}
	The dimension of the first-beating set is given by
	\begin{equation*}
		\dim \Sigma_1 = \begin{cases}
			\dim\Sigma = n-1, & \lambda^\top C = \alpha\lambda^\top, \\
			\dim\Sigma-1 = n-2, & \text{else},
		\end{cases}
	\end{equation*}
	where $\alpha\in \mathbb{R}$. 
	Additionally, if $\lambda^\top C = \alpha\lambda^\top$ (so $\lambda^\top$ is a left eigenvector of $C$), then $\Sigma_\infty = \Sigma_1$.
\end{proposition}
\begin{proof}
	This follows directly from the fact that
	\begin{equation*}
		\Sigma_1 = \left\{ x\in\mathbb{R}^n : \lambda^\top x = 0 = \lambda^\top Cx\right\}.
	\end{equation*}
	The dimension of this set is determined by whether or not the vectors $\lambda^\top$ and $\lambda^\top C$ are linearly independent.
\end{proof}
This argument can be repeated to determine when a system is trivially blocking. 
\begin{proposition}\label{prop:triv_blocking}
	The system \eqref{eq:linear_hybrid} is trivially blocking if and only if the hybrid rank condition is satisfied:
	\begin{equation}\label{eq:triv_blocking}
		\mathrm{rank} \, \begin{bmatrix}
			\lambda^\top \\
			\lambda^\top C \\
			\vdots \\
			\lambda^\top C^{n-1}
		\end{bmatrix} = n.
	\end{equation}
\end{proposition}
\begin{proof}
	This follows from a similar argument to Proposition \ref{prop:size_beating}. If there exists a $k$ such that 
	\begin{equation*}
		\lambda^\top C^k \in \mathrm{span}_\mathbb{R} \left\{ \lambda^\top, \lambda^\top C, \ldots, \lambda^\top C^{k-1}\right\},
	\end{equation*}
	then $\Sigma_k = \Sigma_{k-1} = \Sigma_\infty$.
\end{proof}
\begin{remark}
	The condition \eqref{eq:triv_blocking} is equivalent to the controllability matrix having full rank:
	\begin{equation*}
		\mathrm{rank} \, \mathcal{C}(C^\top,\lambda) = n, \quad \mathcal{C}(C^\top,\lambda) = \left[ \lambda, C^\top\lambda, \ldots, (C^\top)^{n-1}\lambda\right].
	\end{equation*}
	As such, \eqref{eq:linear_hybrid} is trivially blocking if and only if the pair $(C^\top,\lambda)$ is controllable.
\end{remark}
For an illustration of a trivially blocking system in $\mathbb{R}^3$, see Fig. \ref{subfig:triv_blocking}.
\begin{figure}
	\centering
	\begin{subfigure}[t]{0.45\textwidth}
		\centering
		\begin{tikzpicture}[scale=0.75]
			\draw [fill=blue, fill opacity = 0.4] (-3,-1) -- (3,-2) -- (5,1) -- (-1,2) -- cycle;
			\node [below right] at (3, -2) {$\textcolor{blue}{\Sigma}$};
			\draw [fill=red, fill opacity = 0.4] (0,-3.25) -- (2, -1) -- (2, 3.5) -- (0, 1.5) -- cycle;
			\node [right] at (2, 3.5) {$\textcolor{red}{C\Sigma}$};
			\draw[black, thick] (-2,0.5) -- (4,-0.5);
			\node [below right] at (4,-0.5) {$\Sigma_1$};
			\draw [black, thick] (0,-1.5) -- (2,1.5);
			\node [above right] at (2,1.5) {$\textcolor{black}{C\Sigma_1}$};
			\draw [black, thick] (0.667, 2.167) -- (1.333, -1.75);
			\node [above left] at (0.667, 2.167) {$C^2\Sigma_1$};
		\end{tikzpicture}
		\caption{An illustration of a trivially blocking linear hybrid system in three dimensions. For a vector $u\in\Sigma_1$, the set $\{u, Cu, C^2u\}$ forms a basis which is consistent with \eqref{eq:triv_blocking}.}
		\label{subfig:triv_blocking}
	\end{subfigure}
	\hfill
	\begin{subfigure}[t]{0.45\textwidth}
		\centering
		\begin{tikzpicture}
			\shade[ball color = gray!40, opacity = 0.25] (0,0) circle (2cm);
			\draw[thick, blue] (-2,0) arc (180:360:2 and 0.6);
			\draw[thick, blue, dashed] (-2,0) arc (180:0:2 and 0.6);
			\draw[thick, red, rotate=270] (-2,0) arc (180:360:2 and 0.8);
			\draw[thick, red, rotate=270, dashed] (-2,0) arc (180:0:2 and 0.8);
			\node[above right, red] at (0,2) {$\pi(C\Sigma)$};
			\node[blue] at (1.5,-1) {$\pi(\Sigma)$};
			\draw[fill] (-2,0) circle [radius=0.07];
			\draw[fill] (2,0) circle [radius=0.07];
			\node[right] at (2,0) {$\pi(\Sigma_1)$};
			\draw[fill, teal] (-0.77,-0.56) circle [radius=0.07];
			\draw[fill, teal] (0.77,0.56) circle [radius=0.07];
			\node[above right, teal] at (-0.77,-0.56) {$\pi(C\Sigma_1)$};
			\draw[fill, violet] (-0.575,1.4) circle [radius=0.07];
			\draw[fill, violet] (0.575,-1.4) circle [radius=0.07];
			\node[above left, violet] at (-0.6,1.4) {$\pi\left( C^2\Sigma_1\right)$};
		\end{tikzpicture}
		\caption{The trivially blocking system on $\mathbb{R}^3$ projected to $\mathbb{S}^2$ via Lemma \ref{lem:tau_projective}. Here, the map $\pi:\mathbb{R}^3\to\mathbb{S}^2$ normalizes the vector.}
		\label{subfig:projective_blowup}
	\end{subfigure}
	\caption{Illustration of a trivially blocking system and its projection as used in Theorem \ref{thm:no_zeno} below.}
	\label{fig:linear_hds}
\end{figure}
\subsection{Exclusion of Zeno Trajectories}\label{sssec:zeno}
Although linear hybrid systems necessarily have beating and blocking solutions, it turns out that (when the system is trivially blocking) Zeno never occurs. A necessary condition for Zeno from Definition \ref{def:singular_arcs} is for
\begin{equation*}
		\lim_{j\to\infty} \, \mathrm{length} \left\{\pi_1\circ\pi_2^{-1}(j)\right\} = 0.
\end{equation*}
To rule out Zeno, we will develop a lower bound between reset time. To do so, we define the invariant guards and the first-return time along with a key property of the first-return time. 
\begin{definition}[Invariant Guards]
	Call the subspace the invariant guard,
	\begin{equation*}
		\Sigma^A := \left\{ x\in\Sigma : e^{At}x\in\Sigma, \ \forall t\in\mathbb{R}\right\} = \left\{ x\in\Sigma : \lambda^\top Ax = 0\right\} \subset\Sigma,
	\end{equation*}
	along with the invariant beating/blocking sets
	\begin{equation}\label{eq:invariant_beating}
		\Sigma_k^A := \left\{ x\in \Sigma_k : e^{At}x\in\Sigma_k, \ \forall t\in\mathbb{R} \right\},
	\end{equation}
	for $k\in\mathbb{N}\cup\{\infty\}$.
\end{definition}
\begin{definition}[First-Return Time]\label{def:first_return}
	Define the function $\tau:\mathbb{R}^n \to \mathbb{R}^+\cup\{\infty\}$ to be the first-return time to $\Sigma$ from only the continuous dynamics,
	\begin{equation*}
		\tau(x) = \arg\min_{t\geq 0} \, \left\{\lambda^\top e^{At}x = 0\right\}.
	\end{equation*}
	If this condition is never satisfied, set $\tau(x) := \infty$.
\end{definition}
\begin{lemma}\label{lem:tau_projective}
	The map $\tau$ is projective, i.e. $\tau(sx)=\tau(x)$ for any $s\in\mathbb{R}\setminus\{0\}$. Moreover, this function is continuous away from the invariant guard, $\mathbb{R}^n\setminus \Sigma^A$.
\end{lemma}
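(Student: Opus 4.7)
The projectivity claim is immediate from linearity: since $\lambda^\top e^{At}(sx) = s\,\lambda^\top e^{At} x$ and $s \neq 0$, the two scalar functions of $t$ share the identical zero set on $[0, \infty)$, so their first zeros coincide.

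For continuity, I would introduce the smooth scalar function $\phi(t, x) := \lambda^\top e^{At} x$, so that $\tau(x)$ is the infimum of the zero set of $\phi(\cdot, x)$ on $[0, \infty)$. Fix $x_0 \in \mathbb{R}^n \setminus \Sigma^A$ and set $T := \tau(x_0)$. The plan is to reduce continuity of $\tau$ at $x_0$ to the implicit function theorem applied to $\phi = 0$ at $(T, x_0)$, supplemented by a compactness argument that excludes earlier zeros for nearby points.

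The hypothesis $x_0 \notin \Sigma^A$ supplies transversality precisely in the boundary case $T = 0$ (i.e., $x_0 \in \Sigma$): there $\phi(0, x_0) = \lambda^\top x_0 = 0$ while $\partial_t \phi(0, x_0) = \lambda^\top A x_0 \neq 0$, so the IFT yields a unique continuous local zero $y \mapsto t(y)$ with $t(x_0) = 0$, and sign analysis of $\phi$ for small $t > 0$ identifies $t(y)$ with $\tau(y)$. When $T > 0$ we automatically have $x_0 \notin \Sigma$; an analogous IFT at $(T, x_0)$ together with a uniform positive lower bound $\inf_{s \in [0, T-\eta],\, y \in U} |\phi(s, y)| > 0$ on a neighborhood $U$ of $x_0$ (obtained from compactness of $[0, T - \eta]$ and continuity of $\phi$) concludes that $\tau(y) \to T$. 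The case $T = \infty$ is handled by observing that $\phi(\cdot, x_0)$ nowhere vanishing on a compact $[0, M]$ extends to a neighborhood of $x_0$ by continuity in $x$, so $\tau(y) > M$ for all $y$ close to $x_0$ and all $M < \infty$.

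The chief obstacle is guaranteeing the transversality $\partial_t \phi(T, x_0) = \lambda^\top A e^{AT} x_0 \neq 0$ when $T > 0$: this is not supplied directly by $x_0 \notin \Sigma^A$, since $e^{AT} x_0$ can in principle lie in $\Sigma^A$ even when $x_0$ does not. Handling such tangential first zeros requires a separate argument, either invoking a generic non-degeneracy of the first crossing or a perturbative reduction to the transverse case; modulo this technicality, the implicit function theorem plus compactness framework delivers the continuity claim as stated.
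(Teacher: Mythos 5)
Your projectivity argument is identical to the paper's: linearity of $x\mapsto\lambda^\top e^{At}x$ makes the zero set of $t\mapsto\lambda^\top e^{At}(sx)$ coincide with that of $t\mapsto\lambda^\top e^{At}x$ for $s\neq 0$. For continuity you also take the paper's route --- the implicit function theorem applied to $g(t,x)=\lambda^\top e^{At}x$ with $\partial g/\partial t=\lambda^\top Ae^{At}x$ --- and you are in fact more careful than the paper about excluding earlier zeros via compactness and about the boundary cases $T=0$ and $T=\infty$. But you stop short of the one step that completes the argument: establishing the transversality $\lambda^\top Ae^{AT}x_0\neq 0$ at the first return time $T=\tau(x_0)>0$. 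The paper closes exactly this gap with a single assertion: $\Sigma^A$ is invariant under the flow $e^{At}$, hence so is its complement, hence $x_0\notin\Sigma^A$ forces $e^{AT}x_0\notin\Sigma^A$, which is precisely the nonvanishing of $\partial g/\partial t$ that the implicit function theorem needs. Relative to the paper, that invariance claim is the missing idea, and ``modulo this technicality'' leaves your proof of the stated continuity incomplete.

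That said, your instinct to flag the step rather than wave it away is sound. The set $\Sigma^A=\ker\lambda^\top\cap\ker(\lambda^\top A)$ is invariant under $\dot x=Ax$ only if $A\cdot\Sigma^A\subseteq\Sigma^A$, i.e., only if $\lambda^\top A^2$ also vanishes on $\Sigma^A$ --- a condition that holds for some pairs $(A,\lambda)$ but not in general, so the paper's one-line justification does not dispose of the grazing scenario you describe (a tangential first crossing at $e^{AT}x_0\in\Sigma^A$ with $x_0\notin\Sigma^A$, where $\tau$ can genuinely jump). To match the paper you should either verify this invariance for the class of systems under consideration or restrict the continuity claim to points whose first return is transverse; note that Theorem \ref{thm:no_zeno} only uses positivity of an infimum of $\tau$ over a compact set, for which lower semicontinuity would suffice. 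As written, the proposal identifies the right obstruction but does not resolve it, so the continuity half is not yet a proof.
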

\begin{proof}
	Projectivity follows from
	\begin{equation*}
		\lambda^\top e^{At}x = 0 \iff \lambda^\top e^{At}sx = 0,
	\end{equation*}
	for any $s\ne 0$. Continuity follows from applying the implicit function theorem to 
	\begin{equation*}
		g(t,x) = \lambda^\top e^{At}x, \quad \frac{\partial g}{\partial t} = \lambda^\top Ae^{At}x.
	\end{equation*}
	The partial derivative with respect to time is non-zero as long as $e^{At}x\not\in\Sigma^A$. As this set is invariant, the result follows.
\end{proof}
A Zeno trajectory of a trivially blocking system must limit to the origin. The key utility of the above lemma is that the origin may be blown up by passing to the real-projective space, see Fig. \ref{subfig:projective_blowup}. 
\begin{theorem}\label{thm:no_zeno}
	Suppose that the linear hybrid system \eqref{eq:linear_hybrid} is trivially blocking. Then for any initial condition $x_0\ne 0$, the resulting trajectory is not Zeno.
\end{theorem}
\begin{proof}
	Suppose the trajectory $\varphi:E\to\mathbb{R}^n$ is Zeno and let $\{s_k\}_{k=1}^\infty\subset [0,t]$ be the collection of reset times and $x_k = \varphi(s_k,k-1)\in\Sigma$ be the corresponding collection of reset locations. By being Zeno, the reset times satisfy
	\begin{equation*}
		\lim_{k\to\infty} \, s_{k+1}-s_k = \lim_{k\to\infty} \, \tau\left( \Delta(x_k)\right) = 0.
	\end{equation*}
	Let $\pi:\mathbb{R}^n\setminus\{0\}\to\mathbb{RP}^{n-1}$ be the canonical projection to the real projective space. Call the induced sequence $\hat{x}_k = \pi(x_k)$. By compactness of $\mathbb{RP}^{n-1}$, there exist a limit point, $\hat{x}_{k_j}\to \hat{x}$. The result follows if $\hat{\tau}\left(\hat{\Delta}(\hat{x})\right) >0$ where
	\begin{equation*}
		\hat{\tau}:\mathbb{RP}^{n-1}\to\mathbb{R}\cup\{\infty\}, \quad \hat{\Delta}:\pi\left(\Sigma\right) \to \mathbb{RP}^{n-1},
	\end{equation*}
	are the induced maps. Call $\hat{y}:= \hat{\Delta}(\hat{x})$.
	
	Suppose that there exists $z\in C\Sigma\setminus C\Sigma_1$ such that $\hat{y} = \pi(z)$. Then either $\Sigma_1 = \Sigma_1^A$ or $\Sigma_1 \ne \Sigma_1^A$. 
	\begin{enumerate}
		\item Suppose that $\Sigma_1 = \Sigma_1^A$. As this set is invariant, 
		\begin{equation*}
			e^{A\tau(z)}z \not \in \Sigma_1,
		\end{equation*}
		as $z\not\in\Sigma_1$. By linearity of the continuous flow and the projectivity of $\tau$,
		\begin{equation*}
			\tau(z) \geq \inf_{x\in C\Sigma\setminus C\Sigma_1} \, \tau(x) = 
			\inf_{\substack{x\in C\Sigma \\ x\perp \Sigma_1 \\ \lVert x\rVert = 1}} \, \tau(x).
		\end{equation*}
		This value is strictly positive by compactness and continuity of $\tau$.
		\item Suppose that $\Sigma_1 \ne \Sigma_1^A$. We can find a sequence $x_k\in C\Sigma\setminus C\Sigma_1$ such that $x_k\to \Sigma_1$ and $\tau(x_k)\to 0$. As this is insufficient to prohibit Zeno, we pass to the next subspace: Consider $z\in C\Sigma_1\setminus C\Sigma_2$. This leads to the dichotomy $\Sigma_2 = \Sigma_2^A$ or $\Sigma_2\ne \Sigma_2^A$. 
	\end{enumerate}
	The argument above is iterated. A positive return time is found if $\Sigma_k = \Sigma_k^A$ for some $k$. This is guaranteed to occur as the system is trivially blocking and $\Sigma_N = \Sigma_N^A = \{0\}$ for some finite $N$ large enough such that $\Sigma_N = \Sigma_\infty = \{0\}$.
\end{proof}
\section{Affine Hybrid Systems}\label{sec:affine}
In light of Theorem \ref{thm:no_zeno}, a hybrid system experiencing Zeno is not reasonably approximated by a linear hybrid system. A primary shortcoming of linear hybrid systems is that the (trivially) blocking set $\Sigma_\infty = \{0\}$ is also a fixed point of the continuous dynamics. As such, if a trajectory were to approach this set, the continuous dynamics would necessarily slow down enough to prevent reaching that set in finite time. This deficiency in linear hybrid systems motivates the study of affine hybrid systems. 

An affine hybrid system \eqref{eq:affine_hybrid} is a generalization of a linear hybrid system to allow for a bias in both the dynamics and the guard. A bias in the reset is not considered as may be removed via a coordinate change. For the remainder of this section, all hybrid systems will have the form
\begin{equation*}
	\begin{cases}
		\dot{x} = Ax + b, & (t,x)\not\in\Sigma, \\
		x^+ = Cx, & (t,x)\in\Sigma,
	\end{cases}
\end{equation*}
where $\Sigma\subset \mathbb{R}\times\mathbb{R}^n$ is a (time-dependent) affine guard.
\begin{definition}[Affine guard]\label{def:aguard}
	A (time-dependent) affine guard, $\Sigma\subset\mathbb{R}\times\mathbb{R}^n$ will have the form of one of the following.
	\begin{enumerate}
		\item An affine subspace, i.e. there exists $(\lambda_0,\lambda)\in\mathbb{R}\times\mathbb{R}^n$ and a number $a\in\mathbb{R}$ such that
		\begin{equation*}
			\Sigma = \left\{ (t,x)\in\mathbb{R}\times\mathbb{R}^n : \lambda_0t + \lambda^\top x = a \right\}.
		\end{equation*}
		\item Half of an affine subspace, i.e. there exists $(\lambda_0,\lambda), (\nu_0,\nu)\in\mathbb{R}\times\mathbb{R}^n$ and numbers $a,b\in\mathbb{R}$ such that
		\begin{equation*}
			\Sigma = \left\{ (t,x)\in\mathbb{R}\times\mathbb{R}^n : \begin{array}{c}
				\lambda_0t + \lambda^\top x = a \\
				\nu_0t + \nu^\top x < b
			\end{array} \right\}.
		\end{equation*}
		\item A (disjoint) union of the above, i.e. $\Sigma = \cup_\alpha \Sigma_\alpha$.
	\end{enumerate}
	If $\lambda_0=\nu_0$, the guard will be called time-independent.
\end{definition}
\begin{remark}
	Guards that are an affine subspace are a straightforward extension of linear guards discussed in \S\ref{sec:LHDS}. Guards that are half of an affine subspace are required to formulate mechanical impact systems, e.g. the pedagogical bouncing ball example shown in Example \ref{sec:typeII} below. Finally, periodically excited hybrid systems have a guard $\Sigma = \kappa\mathbb{Z}\times\mathbb{R}^n$ is a union of affine subspaces as studied in, e.g. \cite{periodic_hLQR}.
\end{remark}
Although affine guards come is different flavors, their tangent spaces can be identified in the same way:
\begin{equation*}
	(\delta t, \delta x) \in T_{(t,x)}\Sigma \iff \lambda_0\delta t + \lambda^\top \delta x = 0.
\end{equation*}
This will be used below in \S\ref{sec:hLQR} for optimal control.

The analysis in \S\ref{subsec:bblocking} carries over naturally to affine systems and discussing beating and blocking for these systems is omitted as it would be largely redundant. However, a principal difference between linear and affine systems is that the latter may have Zeno solutions as Lemma \ref{lem:tau_projective} is no longer true. It turns out that affine systems may posses two qualitatively distinct depending on whether the guard is an affine subspace (first-order) or half of an affine subspace (second-order). An illustration of these two cases is shown in Fig. \ref{fig:planar_zeno_types}. It is clear that if the guard has the form $\kappa\mathbb{Z}\times\mathbb{R}^n$, Zeno is impossible.

\begin{figure}
	\centering
	\begin{subfigure}[t]{0.45\textwidth}
		\centering
		\begin{tikzpicture}[scale=0.9]
			\draw [<->, thick, blue] (-1,0) -- (5,0);
			\draw [<->, thick, red] (0,-1) -- (0,5);
			\node [below right] at (5,0) {$\textcolor{blue}{\Sigma}$};
			\node [above left] at (0,5) {$\textcolor{red}{\Delta(\Sigma)}$};
			\draw [fill, black] (0,0) circle [radius=0.07];
			\foreach \x in {0.5, 1, 1.5, 2, 2.5, 3, 3.5, 4}{
				\foreach \y in {0.5, 1, 1.5, 2, 2.5, 3, 3.5, 4}{
					\pgfmathsetmacro{\vx}{1/5}
					\pgfmathsetmacro{\vy}{-1/5}
					\draw[->] (\x, \y) -- (\x + \vx, \y + \vy);
				}
			}
			\draw[violet, very thick, ->] (1,3) -- (4,0);
			\draw[violet, very thick, ->] (0,2) -- (2,0);
			\draw[violet, very thick, ->] (0,1) -- (1,0);
			\draw [violet, thick, dashed, ->] (4,0) to [out=-90, in=-45] (-0.5, -0.5) to [out=135, in=-180] (0,2);
			\draw [violet, thick, dashed, ->] (2,0) to [out=-90, in=-45] (-0.25, -0.25) to [out=135, in=-180] (0,1);
		\end{tikzpicture}
		\caption{An example of a planar affine hybrid system with Type I Zeno.}
		\label{fig:first_order_zeno}
	\end{subfigure}
	\hfill
	\begin{subfigure}[t]{0.45\textwidth}
		\centering
		\begin{tikzpicture}[scale=0.9]
			\draw [<->, thick, black] (-1,0) -- (5,0);
			\draw [->, thick, red] (0,0) -- (0,3);
			\draw [->, thick, blue] (0,0) -- (0,-3);
			\draw [fill, black] (0,0) circle [radius=0.07];
			
			\node [below left] at (0,-3) {$\textcolor{blue}{\Sigma}$};
			\node [above left] at (0,3) {$\textcolor{red}{\Delta(\Sigma)}$};
			\foreach \x in {0.5, 1, 1.5, 2, 2.5, 3, 3.5, 4}{
				\foreach \y in {-2.5, -2, -1.5, -1, -0.5, 0, 0.5, 1, 1.5, 2, 2.5}{
					\pgfmathsetmacro{\vx}{\y/5}
					\pgfmathsetmacro{\vy}{-1/5}
					\draw[->] (\x, \y) -- (\x + \vx, \y + \vy);
				}
			}
			\draw [violet, very thick, domain=0:4.828, samples=40, ->]
			plot ({-1/2*\x*\x+2*\x+2}, {-\x+2});
			\draw [violet, very thick, domain=0:2.828, samples=40, ->]
			plot ({-1/2*\x*\x+1.414*\x}, {-\x + 1.414});
			\draw [violet, very thick, domain = 0:1.414, samples=40, ->]
			plot({-1/2*\x*\x + 0.707*\x}, {-\x + 0.707});
			\draw [violet, thick, dashed, ->] (0,-2.828) to [out=180, in=-180] (0, 1.414) ;
			\draw [violet, thick, dashed, ->] (0,-1.414) to [out=180, in=-180] (0, 0.707) ;
		\end{tikzpicture}
		\caption{An example of a planar affine hybrid system with Type II Zeno.}
		\label{fig:second_order_zeno}
	\end{subfigure}
	\caption{Two qualitatively distinct manifestations of Zeno in planar affine hybrid systems.}
	\label{fig:planar_zeno_types}
\end{figure}

To assist with exposition, the remainder of this section will make the following assumption. 
\begin{ass}
	The affine hybrid systems consider below will be planar, i.e. $x\in\mathbb{R}^2$.
\end{ass}
\subsection{Type I Zeno}
Consider the affine hybrid system \eqref{eq:affine_hybrid} where $\Sigma = \{x:\lambda^\top x = 0\}$ and $\lambda^\top$ is not a left eigenvector of $C$.
Zeno trajectories are possible as shown in the following example.
\begin{example}[First-Order Zeno]\label{ex:foZ}
	The blocking set must lie within the intersection of the guard and its image. If the intersection is transverse, then a Zeno trajectory will be (approximately) piece-wise linear. Consider the following dynamics:
	\begin{equation*}
		\begin{cases}
			\left. \begin{array}{l}
				\dot{x} = a \\
				\dot{y} = -b
			\end{array}\right\} & x,y > 0 \\
			\left. \begin{array}{l}
				x^+ = 0 \\
				y^+ = cx^-
			\end{array}\right\} & y^- = 0
		\end{cases}
	\end{equation*}
	where $a,b>0$ and $0<c<1$, see Fig. \ref{fig:first_order_zeno} for its phase portrait.
	Zeno occurs when $ca<b$ with Zeno time
	\begin{equation*}
		\zeta_1(x_0, y_0) = \frac{y_0}{b} + \frac{c}{b-ca}\left(x_0 + \frac{a}{b}y_0\right).
	\end{equation*}
	The Zeno time is linear in the initial conditions.
\end{example}
A natural question to ask is what conditions on the data for an affine hybrid system, $(A,b,C,\lambda)$, results in Zeno as a generalization of the above example?
\begin{theorem}\label{thm:typeI_Zeno}
	Let $(A,b,C,\lambda)$ be the data for an affine hybrid system such that $\lambda^\top$ is not a left eigenvector of $C$ and $\lambda^\top b \ne 0$. Let $v\in\Sigma$ be a unit vector spanning $\Sigma$. Then, for an initial condition $x(0)\ne 0$, the resulting trajectory is Zeno if
	\begin{equation}\label{eq:type1_zeno}
		-1 < v^\top \left[ Cv - \frac{\lambda^\top Cv}{\lambda^\top b}b\right] < 1.
	\end{equation}
\end{theorem}
\begin{proof}
	For an initial condition, $x_0\in\mathbb{R}^2$, let $\{x_k\}_{k=1}^\infty$ be the collection of points in the trajectory that lie on the guard, i.e. for the trajectory $x(\cdot):E\to \mathbb{R}^n$, let
	\begin{equation*}
		x_k := x(t_k, k), \quad t_k = \max_t \, \left\{ t\in \pi_2^{-1}(k)\right\}.
	\end{equation*}
	Then, the trajectory is Zeno if 
	\begin{equation}\label{eq:zeno_conditions}
		\lim_{k\to\infty} \, x_k = 0, \quad \text{and} \quad \sum_{k=1}^\infty \, \tau(Cx_k) < \infty,
	\end{equation}
	where $\tau:\mathbb{R}^2\to\mathbb{R}$ is the first-return time to $\Sigma$ under the dynamics $\dot{x} = Ax+b$. By modifying the proof of Lemma \ref{lem:tau_projective}, this map is continuous near the origin when $\lambda^\top b \ne 0$. Additionally, its derivative can be calculated by the implicit function theorem:
	\begin{equation*}
		\left.\frac{d}{ds}\right|_{s=0} \, \tau(Csv) = -\frac{\lambda^\top v}{\lambda^\top b}.
	\end{equation*}
	Let $R:\mathbb{R}\to\mathbb{R}$ be the first-return map defined along the guard, i.e.
	\begin{equation*}
		R(s) = v^\top\varphi(\tau(Csv), Csv),
	\end{equation*}
	where $\Sigma = v\mathbb{R}\subset\mathbb{R}^2$ and $\varphi:\mathbb{R}\times\mathbb{R}^n\to\mathbb{R}^n$ is the flow of $\dot{x} = Ax+b$. Then the first part of \eqref{eq:zeno_conditions} is satisfied if $|R'(0)|<1$. Calculating this, we find
	\begin{equation*}
		\begin{split}
			R'(0) &= \left.\frac{d}{ds}\right|_{s=0} \, v^\top \varphi(\tau(Csv), Csv) \\
			&= v^\top \left( \frac{\partial \varphi}{\partial t}\tau'(0) + \frac{\partial \varphi}{\partial x}\right) 
			= v^\top \left( -\frac{\lambda^\top Cv}{\lambda^\top b}b + Cv \right).
		\end{split}
	\end{equation*}
	Therefore, \eqref{eq:type1_zeno} states that trajectories approach the origin. It remains to show that this convergence occurs in finite time. Indeed, the sum in \eqref{eq:zeno_conditions} is convergent and can be shown by, e.g. the ratio test and l'H\^{o}pital's rule.
\end{proof}
As this proof linearized the flow about the origin, we obtain the following nice corollary about nonlinear systems.
\begin{corollary}
	Consider the planar hybrid system:
	\begin{equation*}
		\begin{cases}
			\dot{x} = f(x), & x\not\in\Sigma, \\
			x^+ = \Delta(x), & x\in\Sigma.
		\end{cases}
	\end{equation*}
	Suppose that $\Sigma\cap\Delta(\Sigma) = \{p\}$, the intersection is transverse, $\Delta(p)=p$, and $f(p)\not\in T_p\Sigma$. Then trajectories with initial condition in a small enough neighborhood of $p$ are Zeno if
	\begin{equation*}
		-1 < v^\top \left[ \Delta_*v - \frac{\lambda^\top \Delta_*v}{\lambda^\top f(p)}f(p)\right] < 1,
	\end{equation*}
	where $v\in \ker\lambda^\top = T_p\Sigma$ is a unit vector and $\Delta_*:T_p\Sigma\to T_p\Sigma$ is the derivative of the reset.
\end{corollary}
\subsection{Type II Zeno}\label{sec:typeII}
Next, consider the affine hybrid system \eqref{eq:affine_hybrid} where
\begin{equation*}
	\Sigma = \left\{ x :
		\lambda^\top x = 0, \
		\nu^\top x < 0
	\right\},
\end{equation*}
and that $\lambda^\top$ \textit{is} a left eigenvector of $C$. The first-order analysis in the previous subsection will no longer suffice. Consider the following example.
\begin{example}[Second-Order Zeno]\label{ex:soZ}
	If $\Sigma$ and its image are no longer transverse, then the trajectory is not reasonably approximated by a piece-wise linear one. Consider the dynamics (which is a model for the bouncing ball):
	\begin{equation*}
		\begin{cases}
			\left. \begin{array}{l}
				\dot{x} = y \\
				\dot{y} = -g
			\end{array}\right\} & (x>0)\vee(x=0 \wedge y>0) \\
			\left. \begin{array}{l}
				x^+ = 0 \\
				y^+ = -ey^-
			\end{array}\right\} & (x=0 \wedge y<0)
		\end{cases}
	\end{equation*}
	where $g>0$ and $0<e<1$, see Fig. \ref{fig:second_order_zeno} for its phase portrait.
	In this case, Zeno always occurs with Zeno time
	\begin{equation*}
		\zeta_2(x_0,y_0) = \frac{1}{g}y_0 + \frac{3}{g(1-e)}\sqrt{y_0^2+2gx_0},
	\end{equation*}
	which is non-linear in the initial conditions, unlike $\zeta_1$.
\end{example}
While the trajectories in Fig. \ref{fig:first_order_zeno} are lines, the trajectories in Fig. \ref{fig:second_order_zeno} are parabolas. As a result, second-order analysis is required to determined whether or not Zeno occurs in these systems. 
\begin{theorem}\label{thm:typeII_Zeno}
	Let $(A,b,C,\lambda,\nu)$ be the data for an affine system with
	\begin{equation*}
		\Sigma = \{ x : \lambda^\top x = 0, \ \nu^\top x < 0\}.
	\end{equation*}
	Suppose that $\lambda^\top$ is a left eigenvector of $C$ and 
	\begin{equation*}
		C\Sigma = \{ x : \lambda^\top x = 0, \ \nu^\top x > 0\}.
	\end{equation*}
	Moreover, let $\lambda^\top b = 0$ and $\lambda^\top Ab\ne 0$. Let $v$ be the unit vector satisfying $\lambda^\top v = 0$ and $\nu^\top v < 0$. Then, for an initial condition $x(0)\ne 0$, the resulting trajectory is Zeno if
	\begin{equation}\label{eq:typeII_zeno}
		-1 < v^\top \left[ Cv - 2\frac{\lambda^\top ACv}{\lambda^\top Ab}b\right] < 1.
	\end{equation}
\end{theorem}
\begin{proof}
	The proof is similar to the proof of Theorem \ref{thm:typeI_Zeno} where the first-order analysis is replaced by second-order. Indeed,
	\begin{equation*}
		\left.\frac{d}{dt}\right|_{t=0} \, \lambda^\top \varphi_t(0) = \lambda^\top b = 0, \quad
		\left.\frac{d^2}{dt^2}\right|_{t=0} \, \lambda^\top \varphi_t(0) = \lambda^\top Ab \ne 0.
	\end{equation*}
	Adapting the steps above results in \eqref{eq:typeII_zeno}.
\end{proof}
\section{Hybrid Optimal Control}\label{sec:hLQR}
The objective of the remainder of this work is to examine optimal control applied to linear \eqref{eq:linear_hybrid} and affine \eqref{eq:affine_hybrid} hybrid dynamical systems by applying the Pontryagin maximum principle. There have been numerous works studying optimal control for hybrid systems, e.g. \cite{minimum_hybrid} along with the discussion and references therein. A hybrid control system will have the form
\begin{equation}\label{eq:hybrid_control_dynamics}
	\begin{cases}
		\dot{x} = f(x,u), & (t,x)\not\in\Sigma, \\
		x^+ = \Delta(x), & (t,x)\in \Sigma,
	\end{cases}
\end{equation}
where $f(x,u)$ is the controlled vector field. In particular, we will be interested in linear and affine hybrid control systems which take the form
\begin{equation*}
	\text{Linear}:\begin{cases}
		\dot{x} = Ax + Bu, & \lambda^\top x \ne 0, \\
		x^+ = Cx, & \lambda^\top x = 0,
	\end{cases} \quad
	\text{Affine}:\begin{cases}
		\dot{x} = Ax + Bu + b, & x\not\in\Sigma, \\
		x^+ = Cx, & x\in\Sigma,
	\end{cases}
\end{equation*}
where $\Sigma$ is an affine guard as laid out in Definition \ref{def:aguard}. The state and control will be elements $x\in\mathbb{R}^n$ and $u\in\mathbb{R}^m$.
For a control problem over the time interval $[t_0,t_f]\subset\mathbb{R}$, admissible controls will be $\mathcal{U} = L^\infty([t_0,t_f], \mathbb{R}^m)$, the set of measurable and essentially bounded functions with values in $\mathbb{R}^m$. For a given $u(\cdot)\in\mathcal{U}$, a solution to \eqref{eq:hybrid_control_dynamics} will be a hybrid arc $\varphi_u:E_u\to\mathbb{R}^n$ such that it is a solution to the time-dependent hybrid system
\begin{equation*}
	\begin{cases}
		\dot{x} = f(x, u(t)), & x\not\in\Sigma, \\
		x^+ = \Delta(x), & x\in\Sigma,
	\end{cases}
\end{equation*}
in the sense of Definition \ref{def:solution}.
\begin{remark}
	For a fixed initial condition, two separate control inputs may generate hybrid time domains that are wildly different. Indeed, at the outset of a problem, it is not clear how many resets will occur or whether or not the resulting arc is exceptional. 
\end{remark}

Suppose that the objective is to minimize the following cost functional (the Bolza problem):
\begin{equation}\label{eq:hybrid_cost}
	J(x_0, u(\cdot)) = \int_{t_0}^{t_f} \, \ell(x(t),u(t)) \, dt + \varphi(x(t_f)),
\end{equation}
where $x(t) = \varphi_u(t,j)$ is the controlled trajectory obeying \eqref{eq:hybrid_control_dynamics}. The index $j\in\mathbb{N}$ is suppressed as the cost is independent of the number of resets experienced. The optimal cost is found by minimizing the above cost functional:
\begin{equation*}
	u_{x_0}(\cdot) = \arg\inf_{u(\cdot)\in\mathcal{U}} \, J(x_0, u(\cdot)).
\end{equation*}
To find necessary conditions for an optimal control, define the Hamiltonian $H:\mathbb{R}^n\times\mathbb{R}^n\to\mathbb{R}$ given by
\begin{equation}\label{eq:opt_hamiltonian}
	H(x,p) = \min_u \left[\langle p, f(x,u)\rangle + p_0\ell(x,u) \right].
\end{equation}
If $x(t)$ is the trajectory arising from the optimal control $u(t)$, then on the continuous components (between resets), the arc satisfies Hamilton's equations,
\begin{equation*}
	\dot{x} = \frac{\partial H}{\partial p}, \quad \dot{p} = -\frac{\partial H}{\partial x},
\end{equation*}
where $p\in\mathbb{R}^n$ is called the \textit{co-state}. The value of the control can then be recovered from \eqref{eq:opt_hamiltonian}. 

Suppose that a reset occurs at $(\bar{t},\bar{x})\in\Sigma$. The state then undergoes the reset $\Delta$. The co-state undergoes the so-called ``Hamiltonian jump conditions'' (cf. e.g. \S7.4.2 in \cite{liberzon} and Theorem 2.2 in \cite{4303244}) and are
\begin{equation}\label{eq:general_jump}
	\begin{split}
		\langle (\Delta_*)^\top p^+ - p^-, \delta x\rangle &= 0, \\
		\langle H(x^-,p^-) - H(x^+,p^+), \delta t\rangle &= 0,
	\end{split}
\end{equation}
where $\Delta_*$ is the Jacobian of the reset map and the variations lie tangent to $\Sigma$, i.e.
\begin{equation*}
	(\delta t,\delta x) \in T_{(\bar{t},\bar{x})}\Sigma.
\end{equation*}
Assuming that the state does not jump at resets, $\Delta = \mathrm{Id}$, these conditions are precisely the famous Weierstrass-Erdmann corner conditions, cf. \S 4.4 in \cite{kirk}. The derivation of \eqref{eq:general_jump} relies on there being a finite number of (more generally isolated) resets, e.g. see Definition 2 in \cite{sussman}. Then this assumption fails (as is the case for exceptional arcs), complications arise, see \cite{zeno_pmp}.

We will focus on two qualitatively distinct cases for the guard:
\begin{enumerate}
	\item[G1.] Temporally-triggered: Impacts occur at specific times, $\overline{t}\in\mathcal{T}$, i.e. $\Sigma = \mathcal{T}\times \mathbb{R}^n$. In this case
	\begin{equation*}
		\delta t = 0, \quad \delta x \in\mathbb{R}^n.
	\end{equation*}
	This is a special case of the third type of affine guard in Definition \ref{def:aguard}.
	\item[G2.] Spatially-triggered: Impacts occur at specific locations. The guard may be either a time-independent affine or half an affine subspace from Definition \ref{def:aguard}. In this case,
	\begin{equation*}
		\delta t \in\mathbb{R}, \quad \lambda^\top\delta x = 0.
	\end{equation*}
\end{enumerate}
We now state the specific problem we wish to study.
\begin{definition}[HAQR]
	The \textit{hybrid affine quadratic regulator} (HAQR) is the minimization of
	\begin{equation}\label{eq:HAQR_cost}
		\begin{split}
			\min_u \, &\frac{1}{2}\int_{t_0}^{t_f} \, \left( x^\top Q x + u^\top Ru + 2x^\top N u\right) \, dt \\ &\hspace{0.75in} + \frac{1}{2}\left[x(t_f)-y\right]^\top F\left[x(t_f)-y\right],
		\end{split}
	\end{equation}
	where the trajectory obeys the controlled dynamics
	\begin{equation}\label{eq:HAQR_dyn}
		\begin{cases}
			\dot{x} = Ax + Bu + b, & (t,x)\not\in\Sigma, \\
			x^+ = Cx, & (t,x)\in\Sigma,
		\end{cases}
	\end{equation}
	where $\Sigma$ is of the form given by (G1) or (G2). The state and control are vectors, $x\in\mathbb{R}^n$, $u\in\mathbb{R}^m$ and the matrices are all of appropriate dimension. Moreover, $Q,F$ are symmetric positive-semidefinite and $R$ is symmetric positive-definite.
	
	The \textit{hybrid linear quadratic regulator} (HLQR) is the a special case of the HAQR with $b = y = 0$ and $\Sigma = \{x:\lambda^\top x = 0\}$.
\end{definition}
The optimal Hamiltonians \eqref{eq:opt_hamiltonian} for these problems are given by
	\begin{equation}\label{eq:Hamiltonians}
		\begin{split}
			H_{\mathrm{LQR}}(x,p) &= \frac{1}{2}x^\top\tilde{Q}x + p^\top \tilde{A}x - \frac{1}{2}p^\top\tilde{R}p, \\
			H_{\mathrm{AQR}}(x,p) &= H_{\mathrm{LQR}} + p^\top b,
		\end{split}
\end{equation}
where the matrices are given by
\begin{equation*}
	\tilde{Q} =  Q - NR^{-1}N^\top, \quad \tilde{A} = A-BR^{-1}N^\top, \quad \tilde{R} = BR^{-1}B^\top.
\end{equation*}
The continuous equations of motion are specified by Hamilton's equations,
\begin{equation}\label{eq:continuous_oc_dyn}
	\begin{split}
		\dot{x} &= \tilde{A} x - \tilde{R} p + b, \\
		\dot{p} &= -\tilde{Q}x - \tilde{A}^\top p,
	\end{split}
\end{equation}
while the jump conditions for the co-state \eqref{eq:general_jump} become
\begin{equation}\label{eq:variational_corner}
	\begin{split}
		\langle C^\top p^+ - p^-, \delta x\rangle &= 0, \\
		\langle H_{\mathrm{\square QR}}(x^-,p^-) - H_{\mathrm{\square QR}}(x^+,p^+), \delta t\rangle &= 0,
	\end{split}
\end{equation}
with $\square\in \{L,A\}$. For the remainder, we assume that we are exclusively in the affine quadratic regulator unless otherwise stated. As the linear quadratic regulator is a special case, there is no loss of generality.
\subsection{Temporally-Triggered Resets}
Here, we focus on the HAQR when resets occur at specified times (G1). Let $\mathcal{T} =\{t_i\}\subset\mathbb{R}$ be a uniformly separated discrete subset, i.e.
\begin{equation*}
	\inf_{i\ne j} \left| t_i - t_j \right| = \delta > 0,
\end{equation*}
and let $\Sigma = \mathcal{T}\times\mathbb{R}^n$. 
The case when $\mathcal{T} = \kappa \mathbb{Z}$ is periodic is the object of study in \cite{periodic_hLQR} and covers much of the contents of this section in the linear (non-affine) case. The affine case appears to be new although it is a straight-forward extension. For the purposes here, $\mathcal{T}$ need not be periodic.

The resulting temporally triggered affine/linear hybrid dynamics are 
\begin{equation}\label{eq:tthLQR}
	\begin{cases}
		\dot{x}(t) = Ax(t) + Bu(t)+b, & t\not\in\mathcal{T} \\
		x(t^+) = Cx(t), & t\in\mathcal{T}.
	\end{cases}
\end{equation}
We will assume throughout that the above system is stabilizable as this will allow for solvability of a Riccati equation in Theorem \ref{thm:temp_aqr} below. In the linear periodic case with $\mathcal{T} = \kappa\mathbb{Z}$, this takes the form \cite{stable_hybrid}
	\begin{equation}\label{eq:stabilizable_temporal}
		\mathrm{rank} \left[ Ce^{A\kappa}-s\cdot\mathrm{Id}, B, AB, A^2B, \ldots, A^{n-1}B\right] = n.
	\end{equation}
for $s\in\mathbb{C}$ with $|s|\geq 1$.

When $t\not\in\mathcal{T}$, the co-states evolve according to the continuous problem \eqref{eq:continuous_oc_dyn}. At the moment of impact, the co-states jump according to the variational conditions \eqref{eq:variational_corner} subject to $\delta t = 0$ and $\delta x\in\mathbb{R}^n$ being free. Therefore, the jump is given by
\begin{equation*}
	C^\top p^+ = p^-.
\end{equation*}
While the value function for classical LQR is quadratic, the value function for AQR is affine
	\begin{equation*}
		V(t,x) = \frac{1}{2}x^\top S(t)x + c(t)^\top x.
	\end{equation*}
	This motivates the affine ansatz $p(t) = S(t)x(t) + c(t)$.
Applying this yields
\begin{equation*}
	\begin{split}
		C^\top p^+ &= C^\top \left( S^+x^+ + c^+\right) \\
		&= C^\top S^+ Cx^- + C^\top c^+,
	\end{split}
\end{equation*}
while the co-state jump is
\begin{equation*}
	C^\top p^+ = p^- = S^-x^- + c^-.
\end{equation*}
Combining these returns the jump map
\begin{equation}\label{eq:temp_aqr_jump}
	\begin{split}
		S^- &= C^\top S^+ C, \\
		c^- &= C^\top c^+.
	\end{split}
\end{equation}
Optimal trajectories for the temporally triggered HAQR are synthesized by solving
\begin{equation}\label{eq:tthAQR_costate}
	\begin{cases}
		\begin{array}{l}
			\dot{S} = -\tilde{A}^\top S - S\tilde{A} + S\tilde{R}S - \tilde{Q} \\
			\dot{c} = \left[ -\tilde{A}^\top + S(t)\tilde{R}\right]c - S(t)b
		\end{array} & t\not\in\mathcal{T}, \\[3ex]
		\begin{array}{l}
			S^- = C^\top S^+ C \\
			c^- = C^\top c^+
		\end{array} & t\in\mathcal{T}.
	\end{cases}
\end{equation}
\textit{backwards} with the terminal conditions
	\begin{equation*}
		S(t_f) = F, \quad c(t_f) = -2Fy.
\end{equation*}
The \textit{forward} dynamics are given by
\begin{equation}\label{eq:tthAQR_state}
	\begin{cases}
		\dot{x} = \tilde{A}x - \tilde{R}S(t)x - \tilde{R}c(t) + b, & t\in\mathcal{T}, \\
		x^+ = Cx, & t\in\mathcal{T}.
	\end{cases}
\end{equation}
\begin{remark}
	In \eqref{eq:tthAQR_state}, the $x$-dynamics are solved forward while the $(S,c)$-dynamics are backwards in \eqref{eq:tthAQR_costate}. As such, the jump prescribes $x^+$ and $(S^-,c^-)$. As the $(S,c)$-dynamics are backwards, there is no immediate issue if the jumping matrix, $C$, is degenerate. Naturally, controllability issues may arise in this case, but this is reserved for future study.
\end{remark}
\begin{remark}
	It is important to notice that the optimal control problem for temporally triggered jumps allows for the decoupling of the forward and backward dynamics. This will not be the case for spatially triggered jumps.
\end{remark}
\begin{remark}
	The temporally triggered HAQR always admits unique solutions for the costates \eqref{eq:temp_aqr_jump} \textit{backwards} but not necessarily forwards. Additionally, the feedback control is affine,
	\begin{equation*}
		u^* = -R^{-1}\left( N^\top + B^\top S(t)\right)x - R^{-1}B^\top c(t),
	\end{equation*}
	and becomes linear for HLQR.
\end{remark}

If $\mathcal{T} = \kappa\mathbb{Z}$, the corresponding steady-state periodic Riccati equation is
\begin{gather}
	\dot{S} = -\tilde{A}^\top S - S\tilde{A} + S\tilde{R}S - \tilde{Q}, \quad S(\kappa) = C^\top S(0) C, \label{eq:periodic_S}\\
	\dot{c} = \left[ -\tilde{A}^\top + S(t)\tilde{R}\right]c - S(t)b, \quad c(\kappa) = C^\top c(0). \label{eq:periodic_c}
\end{gather}
Solvability of \eqref{eq:periodic_S} is studied in \cite{periodic_hLQR} which requires that the system is stabilizable \eqref{eq:stabilizable_temporal}.  Below, solvability of \eqref{eq:periodic_c} is included.
\begin{theorem}\label{thm:temp_aqr}
	Consider the matrices
	\begin{equation*}
		\renewcommand*{\arraystretch}{1.5}
		Z = \left[\begin{array}{c;{2pt/2pt}c}
			\tilde{A} & -\tilde{R} \\ \hdashline[2pt/2pt]
			-\tilde{Q} & -\tilde{A}^\top
		\end{array}\right], \quad 
		\left[\begin{array}{c;{2pt/2pt}c}
			P_{11} & P_{12} \\ \hdashline[2pt/2pt]
			P_{21} & P_{22} \end{array}\right] = e^{Z\kappa}.
	\end{equation*}
	Then solutions to \eqref{eq:periodic_S} correspond to solutions to
	\begin{equation}\label{eq:triggered_riccati}
		C^\top S_0 C \left[ P_{11} + P_{12}S_0\right] = P_{21} + P_{22}S_0.
	\end{equation}
	Let $\Phi(t,s)$ be the fundamental matrix solution
	\begin{equation*}
		\frac{\partial}{\partial t}\Phi(t,s) = \left[-\tilde{A}^\top + S(t)\tilde{R}\right] \Phi(t,s), \quad \Phi(s,s) = \mathrm{Id}.
	\end{equation*}
	Then solutions to \eqref{eq:periodic_c} can be found by
	\begin{equation}\label{eq:triggered_bias}
		c_0 = \left[ \Phi(\kappa,0) - C^\top\right]^{-1}\cdot \int_0^\kappa \, \Phi(\kappa,\tau)S(\tau)b\, d\tau,
	\end{equation}
	assuming that $\Phi(\kappa,0) - C^\top$ is invertible.
	
	Moreover, if \eqref{eq:stabilizable_temporal} holds, then \eqref{eq:triggered_riccati} admits a solution.
\end{theorem}
\begin{proof}
	We begin with \eqref{eq:triggered_riccati}. A solution $S(t)$ corresponds to a solution of $\dot{z} = Zz$ where $z = [x,p]^\top$ with $p = Sx$. The solutions to this linear system are
	\begin{equation*}
		\begin{split}
			x_\kappa &= P_{11}x_0 + P_{12}p_0, \\
			p_\kappa &= P_{21}x_0 + P_{22}p_0.
		\end{split}
	\end{equation*}
	Using the relation $p = Sx$, the above is
	\begin{equation*}
		S_\kappa \left[ P_{11} + P_{12}S_0\right] x_0 = \left[ P_{21} + P_{22}S_0\right] x_0.
	\end{equation*}
	The result follows from the jump map $S^- = C^\top S^+C$ with $S^- = S_\kappa$ and $S^+ = S_0$.	It is shown in \cite{periodic_hLQR} that if \eqref{eq:stabilizable_temporal} holds, then \eqref{eq:triggered_riccati} admits a solution. 
	Equation \eqref{eq:triggered_bias} can be found via the standard formula for non-homogeneous linear systems, e.g. \S1.10 in \cite{perko}
	\begin{equation*}
		c_\kappa = \Phi(\kappa,0)c_0 - \int_0^\kappa \, \Phi(\kappa,\tau) S(\tau)b \, d\tau,
	\end{equation*}
	and applying the reset $c_\kappa = C^\top c_0$.
\end{proof}
\subsection{Spatially-Triggered Resets}
Suppose now that the guard is of the form (G2). That is, the guard is time-independent and state-dependent. The resulting spatially triggered linear hybrid dynamics are
\begin{equation}\label{eq:st_control}
	\begin{cases}
		\dot{x} = Ax + Bu + b, & x\not\in\Sigma, \\
		x^+ = Cx, & x\in\Sigma.
	\end{cases}
\end{equation}
The optimal control problem for \eqref{eq:st_control} appears to be largely unstudied as most results on this topic deal with temporally-triggered resets. While temporally-triggered resets preserve the linear (or affine) structure, this is no longer true for spatially-triggered systems.

In-between resets, as with the temporally-triggered case, the states/co-states evolve according to \eqref{eq:continuous_oc_dyn}. At the moment of reset, the co-sates jump according to the variational conditions \eqref{eq:variational_corner} subject to $\delta t\in\mathbb{R}$ being free and $\lambda^\top\delta x = 0$. This results in the jump conditions
\begin{equation}\label{eq:space_jump}
	\begin{split}
		p^- &= C^\top p^+ + \varepsilon\cdot\lambda, \\
		H_{\mathrm{AQR}}^+ &= H^-_{\mathrm{AQR}},
	\end{split}
\end{equation}
where the multiplier $\varepsilon$ is chosen to enforce energy conservation.

\begin{remark}\label{rmk:non_beating}
	The co-state reset conditions \eqref{eq:space_jump} implicitly assume that a single reset occurs, i.e. $Cx\not\in\Sigma$. As such the analysis presented in this section assumes that the states does not lie within the first beating set, $\Sigma_1$. Section \ref{sec:beat} focuses on the case when beating inevitability occurs.
\end{remark}
Let $x := x^-$ and $p := p^+$ be the state pre-reset and co-state post-reset. Energy conservation takes the form
	\begin{equation*}
		H_{\mathrm{AQR}}(x,C^\top p + \varepsilon\lambda) = H_{\mathrm{AQR}}(Cx,p).
\end{equation*}
As the Hamiltonian \eqref{eq:Hamiltonians} is quadratic in $p$, the multiplier must solve the quadratic equation
\begin{equation*}
	\alpha \varepsilon^2 + \beta\varepsilon + \gamma = 0,
\end{equation*}
where the coefficients are given by
\begin{equation}\label{eq:LQR_coeff}
	\begin{split}
		\alpha &= -\frac{1}{2}\lambda^\top \tilde{R} \lambda, \\
		\beta(x,p) &= \lambda^\top \left[ \tilde{A} x - \tilde{R} C^\top p \right] + \lambda^\top b, \\
		\gamma(x,p) &= H_{AQR}(x,C^\top p) - H_{AQR}(Cx,p).
	\end{split}
\end{equation}
An immediate cause of concern is the fact that unique solutions need not exist for \eqref{eq:space_jump} as quadratic equations may have 0, 1, or 2 roots based on the value of the discriminant 
\begin{equation*}
	\mathcal{D}(x,p) = \beta(x,p)^2 - 4\alpha\gamma(x,p).
\end{equation*}
For the moment, we shall assume that there exists a unique solution to \eqref{eq:space_jump} and relegate this issue to \S\ref{sec:complications}. 
Optimal trajectories for the spatially triggered AQR follow the continuous dynamics \eqref{eq:continuous_oc_dyn} when $x\not\in\Sigma$ and reset according to $x^+ = Cx$ and \eqref{eq:space_jump} when $x\in\Sigma$ with the terminal conditions
	\begin{equation*}
		x(t_0) = x_0, \quad p(t_f) = Fx(t_f) - 2Fy.
\end{equation*}
It would be desirable to decouple the forward and backward dynamics by developing a Riccati equation for the co-states as was done by \eqref{eq:tthAQR_costate} for the temporally-triggered case. Unfortunately, this is not possible as the occurrence of resets now depends on the state.

Suppose that a stationary solution were to exist of the form $p = Sx$. The corresponding spatially-triggered algebraic Riccati equation (for the HLQR) is
	\begin{equation}\label{eq:spatial_riccati}
		\begin{split}
			0 &= -\tilde{A}^\top S - S\tilde{A} + S\tilde{R}S - \tilde{Q}, \\
			Sx &= C^\top SCx + \varepsilon(x,SCx)\cdot\lambda, \quad x\in\Sigma = 0.
		\end{split}
	\end{equation}
	Solvability of this is not straight forward as the right side of the second condition is not necessarily linear in $x$, see Fig. \ref{fig:weak_control} in \S\ref{sec:spat_trig} below.
\section{Complications with the Co-States}\label{sec:complications}
We return to the solvability issue of \eqref{eq:space_jump} by considering the two qualitatively distinct cases of weakly and strongly actuated resets. Additionally, the cases of beating and Zeno are studied as the jump conditions in these cases do not obey \eqref{eq:space_jump}. This analysis does not seem to have been done before.
\subsection{Weakly Actuated Resets}
Notice that among the three coefficients in \eqref{eq:LQR_coeff}, $\alpha$ is constant while both $\beta$ and $\gamma$ depend on $(x,p)$. A way to obtain a unique solution to \eqref{eq:LQR_coeff} is to have $\alpha = 0$ which is a property of the system and holds independent of the state/co-state. It turns out that the vanishing of this number is related to the actuation of the system.
This condition will be first stated for more general control systems (similar to what was introduced in the note \cite{clarkopreagraven}) before being specialized to affine hybrid systems.

\begin{definition}[Weakly Actuated Resets]\label{def:war}
	Let $M$ be a $n$-dimensional manifold and $\Sigma\subset M$ be a co-dimension 1 embedded submanifold. Consider the controlled hybrid dynamics
	\begin{equation*}
		\begin{cases}
			\dot{x} = f(x) + \sum_i \, g_i(x)u^i, & x\not\in\Sigma, \\
			x^+ = \Delta(x), & x\in\Sigma.
		\end{cases}
	\end{equation*}
	This system has \textit{weakly actuated resets} (WAR) if the control vector fields are all tangent to $\Sigma$, i.e.
	\begin{equation}\label{eq:war}
		g_i(x) \in T_x\Sigma, \quad \forall i \ \text{and} \ \forall x\in\Sigma.
	\end{equation}
\end{definition}
\begin{proposition}
	An affine hybrid control system with affine guard of type (G2) has weakly actuated resets if $\lambda^\top B = 0$.
\end{proposition}
Weakly actuated resets for an affine hybrid control system is useful as it reduces \eqref{eq:space_jump} from a quadratic to a linear equation.
\begin{proposition}\label{prop:war}
	If a system has WAR, then the coefficients in \eqref{eq:LQR_coeff} reduce to
	$\alpha=0$ and $\beta = \lambda^\top Ax + \lambda^\top b$.
\end{proposition}
Unfortunately, divide-by-zero issues arise in the linear case if
\begin{equation*}
	x\in \ker \lambda^\top \cap \ker \lambda^\top A.
\end{equation*}
This intersection is always nontrivial whenever the dimension of the ambient space $n\geq 3$, as both of these sets are hyperplanes. 
Recall, from Section \ref{sec:LHDS}, that this intersection is precisely the invariant guard, $\Sigma^A$. 
Taking Remark \ref{rmk:non_beating} into account, we have the following.
\begin{proposition}
	Suppose that \eqref{eq:st_control} has WAR and $b=0$. Then a unique solution exists to \eqref{eq:space_jump} for any $x\in\Sigma\setminus \left( \Sigma^A \cup \Sigma_1\right)$.
\end{proposition}

\begin{remark}
	The reason why \eqref{eq:war} is referred to as a weakly actuated reset is that the controls have no direct (first-order) influence on resets. For the nonlinear case, suppose that $\Sigma = h^{-1}(0)$ can be described by a regular level set. Then,
	\begin{equation*}
		\frac{d}{dt} h(x) = dh_x \left( f(x) + \sum_i \, g_i(x)u^i \right) = dh_x(f(x)),
	\end{equation*}
	as $dh(g_i) = 0$. The derivative in $h$ is completely independent of the controls, i.e.
	\begin{equation*}
		\frac{\partial}{\partial u} \frac{d}{dt}h(x) = 0.
	\end{equation*}
\end{remark}
\begin{example}[Mechanical Systems]
	Although the WAR condition, \eqref{eq:war}, appears to be quite special, mechanical systems always satisfy this. Let $x = (q,v)$ be the position and velocity of a system. The equations of motion (linearized) are given by
	\begin{equation*}
		\begin{split}
			\dot{q} &= v, \\
			\dot{v} &= Vq + Kv + \tilde{B}u +\tilde{b}.
		\end{split}
	\end{equation*}
	As such, the controlled dynamics has the form
	\begin{equation*}
		\begin{bmatrix}
			\dot{q} \\ \dot{v}
		\end{bmatrix} = \begin{bmatrix}
			0 & \mathrm{Id} \\ V & K
		\end{bmatrix}\begin{bmatrix} q \\ v \end{bmatrix} + \begin{bmatrix}
			0 \\ \tilde{B}
		\end{bmatrix}u + \begin{bmatrix} 0 \\ \tilde{b} \end{bmatrix}.
	\end{equation*}
	The controls only have direct influence over the velocities (not positions) and impacts are triggered only by positions (not velocities). As such, $\Sigma$ is generated by a vector of the form
	\begin{equation*}
		\lambda = \begin{bmatrix}
			\tilde{\lambda}^\top & 0
		\end{bmatrix}^\top.
	\end{equation*}
	It is clear to see that
	\begin{equation*}
		\lambda^\top B = 
		\begin{bmatrix}
			\tilde{\lambda}^\top & 0
		\end{bmatrix}\begin{bmatrix}
			0 \\ \tilde{B}
		\end{bmatrix} = 0.
	\end{equation*}
	Therefore, mechanical impact systems have WAR. Additionally, as only velocities jump at resets the matrix $C$ has the form
		\begin{equation*}
			C = \begin{bmatrix} \mathrm{Id} & 0 \\ C_1 & C_2 \end{bmatrix}.
		\end{equation*}
		If the resulting dynamics are Zeno, then they must be Type II (see Section \ref{sec:typeII}). As such, (nonlinear) \textit{mechanical systems provide a class of spatially triggered hybrid systems with weakly actuated resets and Type II Zeno}.
\end{example}
\subsection{Strongly Actuated Resets}\label{sec:sar}
A system is said to have strongly actuated resets if it does not have weakly actuated resets, i.e. $\alpha \ne 0$. In this case, \eqref{eq:space_jump} remains quadratic. It turns out that the two solutions characterize which side of the guard the trajectory intersects.
\begin{proposition}\label{prop:strong_solutions}
	Let $\mathcal{D} = \beta^2-4\alpha\gamma$ be the discriminant with coefficients from \eqref{eq:LQR_coeff}. Then $|\lambda^\top \dot{x}^-| = |\sqrt{\mathcal{D}}|$.
\end{proposition}
\begin{proof}
	The multiplier, $\varepsilon$, is given by the quadratic equation
	\begin{equation*}
		\varepsilon_{\pm} = \frac{-\beta \pm \sqrt{\mathcal{D}}}{2\alpha}.
	\end{equation*}
	Immediately before the reset (with $x=x^-$ and $p=p^+$),
	\begin{equation*}
			\begin{split}
				\lambda^\top\dot{x}^- &= \lambda^\top \left[ \tilde{A}x^- - \tilde{R} p^- + b\right] \\
				&= \lambda^\top \tilde{A}x - \lambda^\top\tilde{R}\left[ C^\top p +\varepsilon\cdot\lambda\right] +\lambda^\top b\\
				&= \lambda^\top\tilde{A}x - \lambda^\top\tilde{R}C^\top p - \varepsilon\lambda^\top\tilde{R}\lambda + \lambda^\top b \\
				&= \beta + 2\varepsilon\alpha.
			\end{split}
		\end{equation*}
		Therefore $\lambda^\top \dot{x}^- = \sqrt{\mathcal{D}}$ or $-\sqrt{\mathcal{D}}$.
\end{proof}
When two solutions for $\varepsilon$ exist, there exists a reasonable interpretation to this dichotomy based on the direction the arc intersects the guard. Suppose that the affine guard has the form
\begin{equation*}
	\Sigma = \{x : \lambda^\top x = a\}.
\end{equation*}
Decompose the state-space as 
\begin{equation*}
	\mathbb{R}^n = M^+ \cup M^- \cup \Sigma, \quad \begin{array}{c}
		M^+ = \{x : \lambda^\top x > a\}, \\
		M^- = \{x : \lambda^\top x < a\}.
		\end{array}
\end{equation*}
Then, the correct solution to \eqref{eq:space_jump} is determined by which space the trajectory approaches the guard in: suppose that $x(t^*)\in\Sigma$, then for $\delta>0$ small enough
\begin{equation*}
	\begin{split}
		x(t^*-\delta) \in M^+ &\implies \lambda^\top\dot{x}^- < 0 \implies \varepsilon = \frac{-\beta - \sqrt{\mathcal{D}}}{2\alpha},\\
		x(t^*-\delta) \in M^- &\implies \lambda^\top\dot{x}^- > 0 \implies \varepsilon = \frac{-\beta + \sqrt{\mathcal{D}}}{2\alpha}.
	\end{split}
\end{equation*}
A visualization of this case is shown in Fig. \ref{fig:two_solutions}.
\begin{figure}
	\centering
	\begin{tikzpicture}
		\draw [<->, thick, blue] (-1,0) -- (5,0);
		\draw [<->, thick, red] (0,-2.5) -- (0,2.5);
		\node [below right] at (5,0) {$\textcolor{blue}{\Sigma}$};
		\node [above left] at (0,2.5) {$\textcolor{red}{\Delta(\Sigma)}$};
		\node at (4.5,2) {$M^+$};
		\node at (4.5,-2) {\textcolor{violet}{$M^-$}};
		\draw[thick, ->] (1.5,1) to [out=20, in=80] (3,0);
		\draw[thick, dashed, ->, violet] (1.5,-1) to [out=-20, in=280] (3,0);
		\node [above right] at (3,0) {$x_1^-$};
		\node [below right] at (3,0) {\textcolor{violet}{$x_2^-$}};
		\draw[thick, ->] (0, 2) to [out=20, in=100] (3.5,1.5);
		\node[below right] at (0,2) {$x^+$};
	\end{tikzpicture}
	\caption{A schematic for the case of two solutions for the co-state jump. In this case, as $x^+\in M^+$, the correct solution yields the curve $x_1$.}
	\label{fig:two_solutions}
\end{figure}
A interpretation on why strongly actuated resets have two solutions while weakly actuated only have one is that $\lambda^\top \dot{x}$ can be influenced by the controls in the latter and not in the former. As a result, both trajectories (the solid and dashed arcs in Fig. \ref{fig:two_solutions}) are possible in systems with strongly actuated resets while only one is possible in systems with weakly actuated resets.
\subsection{Beating}\label{sec:beat}
The entirety of the above analysis has tacitly assumed that $x^-\in\Sigma\setminus\Sigma_1$, i.e. no beating occurs at the point of impact. As shown in Proposition \ref{prop:size_beating}, although smaller, the first-beating set is non-trivial. Consider the case when $x\in\Sigma_1\setminus \Sigma_2$ and $\lambda^\top$ is \textit{not} a left eigenvector of $C$ (the other cases can be determined by a similar analysis).

Let the first-beating set be represented by
\begin{equation*}
	\Sigma_1 = \left\{ x\in\mathbb{R}^n : \lambda^\top x = \mu^\top x = 0\right\},
\end{equation*}
where $\mu^\top = \lambda^\top C$. The variational conditions \eqref{eq:variational_corner} become
\begin{equation*}
	\begin{split}
		p^- &= \left(C^2\right)^\top p^+ + \varepsilon\cdot\lambda + \eta\cdot\mu, \\
		H_{\mathrm{AQR}}^+ &= H_{\mathrm{AQR}}^-.
	\end{split}
\end{equation*}
This system is under-determined as there are now two unknown multipliers, $\varepsilon$ and $\eta$. To resolve this issue, a technique similar to that of \cite{submersive_resets} is utilized. Assuming sufficient regularity, denote the 1-dimensional manifold (parameterized by $\lambda$ and $\mu$) by
\begin{equation*}
	\Pi_1 := \left\{ \left(C^2\right)^\top p^+ + \varepsilon\cdot\lambda + \eta\cdot\mu : H_{\mathrm{AQR}}^+ = H_{\mathrm{AQR}}^-\right\}.
\end{equation*}
Consider, without loss of generality, the first arc of the trajectory. This arc must satisfy the following boundary conditions
\begin{equation*}
	\begin{array}{ll}
		x(0) = x_0, & x(t_1) \in\Sigma_1, \\
		p(0) \in \mathbb{R}^n, & p(t_1) \in \Pi_1.
	\end{array}
\end{equation*}
These have $(2n+1)$ unknowns given by $x(0)$, $p(0)$, and $t_1$. There are now $(2n+1)$ relations arising from
\begin{equation*}
	\begin{array}{ll}
		x(0) = x_0, & \text{$n$ relations}, \\
		x(t_1)\in\Sigma_1, & \text{2 relations}, \\
		p(t_1) \in \Pi_1, & \text{$(n-1)$ relations}.
	\end{array}
\end{equation*}
As the number of unknowns and relations are now equal, this problem is no longer under-determined. If, however, multiple solutions exist, a technique similar to finding the unique solution for strongly actuated resets in \S\ref{sec:sar} can be used.

This procedure can be extended to the $k^{th}$-beating set. Suppose that the system is trivially blocking and denote $\lambda_j^\top := \lambda^\top C^{j}$. Then,
\begin{equation*}
	\Sigma_k = \left\{ x : \lambda_0^\top x = \lambda_1^\top x = \ldots = \lambda_k^\top x = 0\right\}.
\end{equation*}
Define the $k$-dimensional manifold (assuming sufficient regularity),
\begin{equation*}
	\Pi_k := \left\{ \left(C^{k+1}\right)^\top p^+ + \sum_{j=0}^k \, \eta_j\cdot\lambda_j : H_{\mathrm{AQR}}^+ = H_{\mathrm{AQR}}^-\right\}.
\end{equation*}
Then an arc that terminates on $\Sigma_k$ must satisfy the boundary conditions
\begin{equation}\label{eq:kbeating_boundary}
	\begin{array}{ll}
		x(0) = x_0, & x(t_1) \in\Sigma_k, \\
		p(0) \in \mathbb{R}^n, & p(t_1) \in \Pi_k.
	\end{array}
\end{equation}
In principle, optimal trajectories that may lie within the beating sets can be found by solving \eqref{eq:kbeating_boundary}. However, this is highly non-trivial for two reasons.
\begin{enumerate}
	\item It is not immediately clear which beating set the impact should occur in.
	\item Even if the $k$ is known, the set $\Pi_k$ also depends on the value of the co-state after impact. 
\end{enumerate}
Developing a numerical algorithm to deal with beating solutions is beyond the scope of this paper and is a topic for future work.
\subsection{Zeno}
To this point, necessary conditions for optimality for non-exceptional hybrid arcs have been studied. However, the existence problem has not been addressed. It turns out that solutions need not exist. Notably, systems whose uncontrolled dynamics are Zeno (cf. Theorems \ref{thm:typeI_Zeno} and \ref{thm:typeII_Zeno}) will fail to have non-Zeno optimal arcs over long enough time horizons. For an explicit demonstration of the following to the special case of the bouncing ball (an affine hybrid system with Type II Zeno trajectories) see \cite{zeno_pmp} and the numerical example below in \S\ref{subsec:Zeno_ex}.

\begin{definition}[Robustly Zeno]
	An affine hybrid control system is \textit{robustly Zeno} if there exists $\delta>0$ such that if $\lVert u\rVert_R = u^\top R u < \delta$, then the resulting controlled trajectories are all Zeno.
\end{definition}
The notion of robustly Zeno follows from its structural stability \cite{hybrid_sstable}. 

\begin{proposition}\label{prop:zeno_demanding}
	Consider a robustly Zeno affine hybrid control system. Consider the modified cost functional 
	\begin{equation*}
		J_\varepsilon(x_0,t_f) = \inf_{\substack{u(\cdot)\in\mathcal{U} \\ \lVert x(\cdot)\rVert > \varepsilon}} \, J(x_0, u(\cdot)),
	\end{equation*}
	i.e. there is an additional constraint that the trajectory must always be $\varepsilon$-far from the Zeno point. Then $J_\varepsilon(x_0,t_f)\to \infty$ as $t_f\to\infty$.
\end{proposition}

To compare these ``uniformly non-Zeno'' trajectories with Zeno ones, let us construct a Zeno arc in the following fashion. Consider the HAQR problem \eqref{eq:HAQR_cost} and \eqref{eq:HAQR_dyn} and suppose that all trajectories of the uncontrolled system are Zeno (see Theorems \ref{thm:typeI_Zeno} and \ref{thm:typeII_Zeno}). For simplicity, assume that $Q=0$, $N=0$, and $t_0=0$.
For an initial condition, let $\zeta(x_0)$ be the Zeno time and suppose that $t_f>>\zeta(x_0)$. Define the control function
\begin{equation}\label{eq:zeno_control}
	u_{\text{Zeno}}(t) = \begin{cases}
		0, & t<s^* \\
		u^*(t), & t\geq s^*
	\end{cases}
\end{equation}
where $s^*$ and $u^*$ solve the optimal control problem
\begin{gather*}
	\min_{u,s} \, \int_{s}^{t_f} \, \frac{1}{2}u^\top R u\, dt + \frac{1}{2}\lVert x(t_f)-y\rVert_F^2, \\
	\text{subject to \eqref{eq:HAQR_dyn}, and } x(s) = 0,
\end{gather*}
where $\lVert x\rVert_F := x^\top F x$.
The resulting trajectory is Zeno when $s^*>\zeta(x_0)$ and let the resulting cost be expressed as
\begin{equation*}
	J_{\text{Zeno}}(x_0,t_f) = \int_{s^*}^{t_f} \, \frac{1}{2}u_{\text{Zeno}}^\top R u_{\text{Zeno}} \, dt + \frac{1}{2}\lVert x(t_f) - y\rVert_F^2.
\end{equation*}
It is important to notice that this cost is independent from $t_f$. This fact implies that this Zeno control scheme will eventually out-perform non-Zeno strategies. 
\begin{corollary}\label{cor:Zeno_control}
	Suppose that the HAQR problem is robustly Zeno. Then for any $x_0$ and all $\varepsilon>0$, there exists $t_f$ such that
	\begin{equation*}
		J_\varepsilon(x_0,t_f) > J_{\text{Zeno}}(x_0,t_f).
	\end{equation*}
\end{corollary}
A question for future work is whether or not $J_\varepsilon(x_0,t_f)\to J_{\text{Zeno}}(x_0,t_f)$ as $\varepsilon\to 0$ for $t_f$ fixed.

\section{Examples}\label{sec:examples}
The code for the results in this section is written in Matlab and is publicly available at \url{https://github.com/wiclark/hybrid_LQR/}. Four numerical examples are provided below. 
	\begin{enumerate}
		\item Section \ref{sec:ex_temp} exhibits a temporally-triggered linear hybrid system and solves the corresponding periodic Riccati equation \eqref{eq:triggered_riccati}.
		\item Section \ref{sec:spat_trig} presents a simple example for both weakly actuated and strongly actuated spatially-triggered systems. 
		\item Section \ref{sec:dmss} examines the mechanical example of the double spring system with impacts as an example of a 4-dimensional spatially-triggered affine system.
		\item Section \ref{subsec:Zeno_ex} uses dynamic programming to find Zeno trajectories in accordance with Corollary \ref{cor:Zeno_control}.
\end{enumerate}
\subsection{Temporally-Triggered}\label{sec:ex_temp}
Consider the model of legged locomotion with foot-slip studied in \cite{foot_slip} with an added control on the $\xi$-dynamics:
\begin{equation*}
	\begin{split}
		\dot{\theta} &= \varepsilon\xi, \\
		\dot{\xi} &= C\left[ \sin\theta \left(\cos\alpha + \varepsilon\xi^2\cos\theta\right) - \cos\theta \left(\eta + \xi\cos\theta\right)\right] + u, \\
		\dot{\eta} &= \sin\alpha - \eta - \xi\cos\theta,
	\end{split}
\end{equation*}
where $C = \mu/(1+\mu\sin^2\theta)$. Here, $\theta$ corresponds to the angle of the leg with the ground, $\xi$ the leg's angular velocity, and $\eta$ the sliding velocity of the foot. The guard is $\Sigma = \{\theta = -\delta\}$ and reset
\begin{equation*}
	\Delta(\theta,\xi,\eta) = (\theta+2\delta, \cos(2\delta)\xi, \eta+\cos\delta[1-\cos(2\delta)]\xi),
\end{equation*}
where $(\mu,\varepsilon,\delta)$ are model parameters controlling the geometry of the leg, the strength of the friction, and the step width, respectively.

Using the parameters
\begin{equation*}
	\alpha = \frac{\pi}{16}, \quad \delta = \frac{\pi}{8}, \quad \varepsilon = 2, \quad \mu = 2,
\end{equation*}
a periodic orbit was found with period $\kappa = 1.3325$. Linearizing about this trajectory, and applying the weight matrices,
\begin{equation*}
	Q = \mathrm{Id}_{3\times 3}, \quad N = 0, \quad R = 1,
\end{equation*}
a positive-definite symmetric solution to \eqref{eq:triggered_riccati} is found to be
\begin{equation*}
	S_0 = \begin{bmatrix}
		1.2354 & 1.3802 & 0.2103 \\
		1.3802 & 3.6486 & 0.1987 \\
		0.2103 & 0.1987 & 0.5021
	\end{bmatrix}.
\end{equation*}
\subsection{Spatially Triggered}\label{sec:spat_trig}
We next consider the case of the spatially-triggered HLQR. We present two examples, one where the resets are strongly actuated and another where they are weakly actuated. The values for the data are largely arbitrary.

The purpose of these examples is to numerically validate the Hamiltonian jump condition \eqref{eq:space_jump}. If the optimal arc never resets, then the cost function can be cast as a function on the initial co-states
\begin{equation*}
	J(p_0) = \int_{t_0}^{t_f} \, \ell\left( x(t), u\left( x(t),p(t)\right) \right) \, dt + \varphi\left(x(t_f)\right),
\end{equation*}
where $(x(t),p(t))$ follow the dynamics \eqref{eq:continuous_oc_dyn}. Then the following holds
\begin{equation*}
	\arg\min_{p_0} \, J(p_0) \implies p(t_f) = Fx(t_f).
\end{equation*}
Likewise, if an optimal arc has a single reset, the cost can be viewed as
\begin{equation*}
	J^* = \min_{p_0,p_1^+} \, J(p_0,p_1^+),
\end{equation*}
where $p_0$ is the initial co-state and $p_1^+$ is the co-state immediately after the reset, see Fig. \ref{fig:spatially_triggered}. This leads to 
\begin{equation*}
	\arg\min_{p_0,p_1^+} \, J(p_0,p_1^+) \implies \begin{cases}
		p(t_f) = Fx(t_f), \\
		p_1^- = C^\top p_1^+ + \varepsilon\cdot\lambda, \\
		H_{LQR}^+ = H_{LQR}^-.
	\end{cases}
\end{equation*}
We numerically verify this relationship by computing the minimization via Matlab's \texttt{fminsearch} and checking that the jump conditions hold.
\begin{figure}
	\centering
	\begin{subfigure}[t]{0.49\textwidth}
		\centering
		\begin{tikzpicture}[scale=0.7]
			\draw[<->, thick, blue] (-1,0) -- (5,0);
			\draw[<->, thick, red] (0,-1) -- (0,5);
			\node [above right, blue] at (5,0) {$\mathrm{ker}\lambda^\top$};
			\node [above left, red] at (0,5) {$C \mathrm{ker}\lambda^\top$};
			\draw[thick] (3, 2) to [out=-20, in=60] (3.5,0);
			\node [above right] at (3,2) {$(x_0,p_0)$};
			\draw[fill] (3,2) circle [radius=0.07];
			\draw[fill] (3.5,0) circle [radius=0.07];
			\node[below right] at (3.5,0) {$(x_1^-,p_1^-)$};
			\draw[thick, dashed] (3.5,0) to [out=-90, in=-45] (-0.25,-0.25) to [out=135,in=-180] (0,4);
			\draw[thick] (0,4) to [out=-10, in = 80] (2,1);
			\draw[fill] (0,4) circle [radius=0.07];
			\draw[fill] (2,1) circle [radius=0.07];
			\node [above right] at (0,4) {$(x_1^+, p_1^+)$};
			\node [below] at (2,1) {$(x_f,p_f)$};
		\end{tikzpicture}
		\caption{An optimal trajectory for the HLQR problem with a single reset. If the cost is optimized over $p_0$ and $p_1^+$ independently, then the jump condition \eqref{eq:space_jump} should follow for free.}
		\label{fig:spatially_triggered}
	\end{subfigure}
	\hfill
	\begin{subfigure}[t]{0.49\textwidth}
		\centering
		\includegraphics[width=\columnwidth]{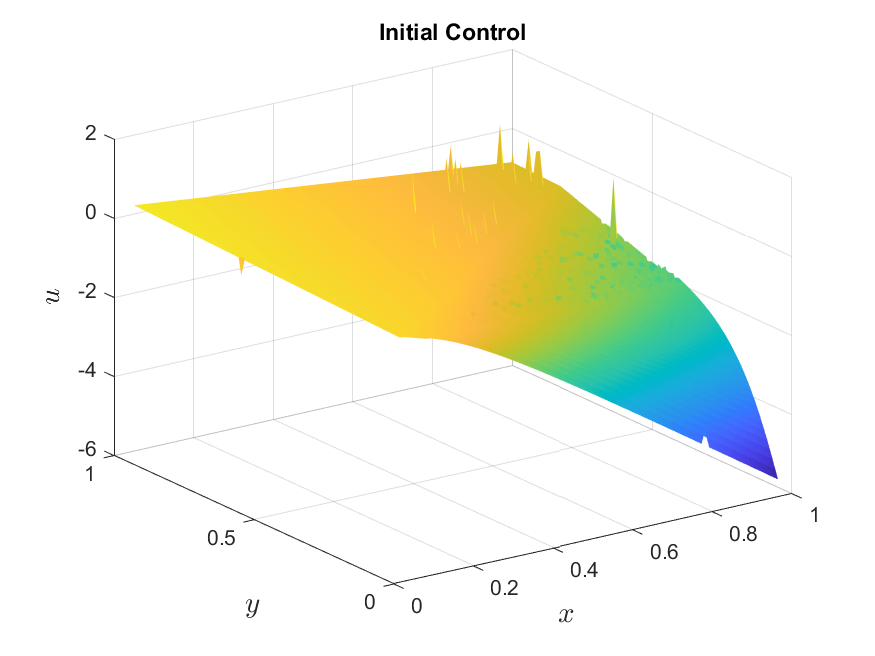}
		\caption{The feedback control law $u(t=0;x,y)$ from the weakly actuated problem in Example \ref{ex:weak}. This function is neither linear nor affine in $(x,y)$.}
		\label{fig:weak_control}
	\end{subfigure}
\end{figure}

Consider the data:
\begin{equation*}
	A = \begin{bmatrix}
		0 & 1 \\ -1 & 0
	\end{bmatrix}, \quad C = \begin{bmatrix}
		0 & 0 \\ 2 & 0
	\end{bmatrix}, \quad \lambda = \begin{bmatrix}
		0 \\ 1
	\end{bmatrix},
\end{equation*}
along with $Q = F = \mathrm{Id}_{2\times 2}$, $R = 1$, and $t_f = 1$.
\begin{example}[Weakly Actuated]\label{ex:weak}
	Let $B = [1,0]^\top$ and $x_0 = [1,0.3]^\top$. This results in weakly actuated resets as $\lambda^\top B = 0$. Optimizing over $p_0$ and $p_1^+$, we get
		\begin{equation*}
			p_0 = \begin{bmatrix}
				2.3155 \\ -1.4776
			\end{bmatrix}, \quad p_1^+ = \begin{bmatrix}
				-0.0211 \\ 0.4088
			\end{bmatrix},
		\end{equation*}
		which produces
		\begin{equation*}
			p_1^- = \begin{bmatrix}
				0.8175 \\ -2.4574
			\end{bmatrix}, \quad p_f = \begin{bmatrix}
				0.0991 \\ 0.2702
			\end{bmatrix}, \quad x_f = \begin{bmatrix}
				0.0991 \\ 0.2702
			\end{bmatrix}.
	\end{equation*}
	The boundary conditions are indeed satisfied as $p_f = Fx_f$, $H^- = 0.0365 = H^+$, and
	\begin{equation*}
		p_1^- - C^\top p_1^+ = \begin{bmatrix}
			-5.9964\cdot 10^{-5} \\ -2.4574
		\end{bmatrix} \approx -2.4574 \lambda.
	\end{equation*}
	The computed feedback control is shown in Fig \ref{fig:weak_control}. Note that the feedback control is not linear in the states. This reinforce the idea that the ansatz $p = Sx$ for spatially triggered systems is not valid.
\end{example}
\begin{example}[Strongly Actuated]\label{ex:strong}
	Let $B = [0,1]^\top$ and $x_0 = [0.5,0.2]^\top$.  The resets are strongly actuated as $\lambda^\top B \ne 0$. Optimizing over $p_0$ and $p_1^+$, we get
		\begin{equation*}
			p_0 = \begin{bmatrix}
				3.5105 \\ 1.3351
			\end{bmatrix}, \quad p_1^+ = \begin{bmatrix}
				0.2580 \\ 1.7373
			\end{bmatrix},
		\end{equation*}
		which produces
		\begin{equation*}
			p_1^- = \begin{bmatrix}
				3.4746 \\ 0.0806
			\end{bmatrix}, \quad p_f = \begin{bmatrix}
				0.7333 \\ 0.6053
			\end{bmatrix}, \quad x_f = \begin{bmatrix}
				0.7333 \\ 0.6053
			\end{bmatrix}.
	\end{equation*}
	The boundary conditions remain satisfied as $p_f = Fx_f$, $H^- = 0.2689 = H^+$, and
	\begin{equation*}
		p_1^- - C^\top p_1^+ = \begin{bmatrix}
			9.9650\cdot 10^{-6} \\ 0.0806
		\end{bmatrix} \approx 0.0806 \lambda.
	\end{equation*}
\end{example}
\subsection{Double Mass Spring System}\label{sec:dmss}
\begin{figure}
	\centering
	\begin{tikzpicture}
		\draw[thick] (0,-1) -- (0,1);
		\fill[ pattern = {Lines[distance=2mm, angle=45, line width=0.25mm]},
		pattern color=black] (-0.5,-1) rectangle (0,1);
		\draw[thick] (6,-1) -- (6,1);
		\fill[ pattern = {Lines[distance=2mm, angle=45, line width=0.25mm]},
		pattern color=black] (6,-1) rectangle (6.5,1);
		\draw[
		decoration={
			coil,
			aspect=0.3, 
			segment length=1.5mm, 
			amplitude=2mm, 
			pre length=3mm,
			post length=3mm},
		decorate
		] (0,0) -- ++(2,0) 
		node[midway,above=0.25cm,black]{$k_1$}; 
		\draw[
		decoration={
			coil,
			aspect=0.3, 
			segment length=1.5mm, 
			amplitude=2mm, 
			pre length=3mm,
			post length=3mm},
		decorate
		] (4,0) -- ++(2,0) 
		node[midway,above=0.25cm,black]{$k_2$}; 
		\node[draw,
		minimum width=0.75cm,
		minimum height=0.75cm] at (2.375,0) {$m_1$};
		\node[draw,
		minimum width=0.75cm,
		minimum height=0.75cm] at (3.625,0) {$m_2$};
		\draw[<->, thick] (2,1) -- (4,1);
		\draw[thick] (3,0.8) -- (3,1.2);
		\node[above left] at (2,1) {$x_1$};
		\node[above right] at (4,1) {$x_2$};
	\end{tikzpicture}
	\caption{A schematic of the double mass spring system presented in Section \ref{sec:dmss}.}
	\label{fig:dblspring}
\end{figure}
Consider the double mass spring system shown in Fig. \ref{fig:dblspring}. Let $x_1$ be the location of the left mass and $x_2$ be the location of the right mass measured from the center. Implementing a controlled force on the left mass yields the dynamics
	\begin{equation*}
		\begin{split}
			\ddot{x}_1 &= -k_1(x_1-d_1) + u, \\
			\ddot{x}_2 &= -k_2(x_2-d_2),
		\end{split}
	\end{equation*}
	where $d_{1,2}$ is the displacement of the spring when $x_{1,2}=0$. The masses impact when $x_1+x_2 = 0$ with
	\begin{equation*}
		\dot{x}_1^+ = -\dot{x}_2, \quad \dot{x}_2^+ = -\dot{x}_1.
	\end{equation*}
	This is an example of a 4-dimensional hybrid affine quadratic regulator with the data
	\begin{equation*}
		A = \begin{bmatrix}
			0 & 1 & 0 & 0 \\
			-k_1 & 0 & 0 & 0 \\
			0 & 0 & 0 & 1 \\
			0 & 0 & -k_2 & 0
		\end{bmatrix}, \quad b = \begin{bmatrix}
			0 \\ -k_1d_1 \\ 0 \\ -k_2d_2
		\end{bmatrix},
	\end{equation*}
	\begin{equation*}
		C = \begin{bmatrix}
			1 & 0 & 0 & 0 \\
			0 & 0 & 0 & -1 \\
			0 & 0 & 1 & 0 \\
			0 & -1 & 0 & 0
		\end{bmatrix}, \quad \lambda = \begin{bmatrix}
			1 \\ 0 \\ 1 \\ 0
		\end{bmatrix},
	\end{equation*}
	where $M = m_1+m_2$ and with $Q = 0$, $N = 0$, $F = 50\mathrm{Id}$, $R = 1$, $B = e_2$, and $y = \delta e_3$, where $e_j$ is the vector in the $j^{th}$-coordinates. The objective of this control problem is to steer the right mass by only actuating the left one.

\begin{table}
	\centering
	\begin{tabular}{r|cccccc}
		Parameter & $k_1$ & $k_2$ & $d_1$ & $d_2$ & $\delta$ & $t_f$\\ \hline
		Value & 2 & 1 & 1 & 1 & 2 & 5
	\end{tabular}
	\caption{Parameter values used in Section \ref{sec:dmss}.}
	\label{tab:param}
\end{table}

Using parameter values prescribed by Table \ref{tab:param}, a computed trajectory is shown in Fig. \ref{fig:spring_trajectory}. As this system has weakly actuated resets, the co-state jump can be unambiguously determined from Proposition \ref{prop:war}. This was computed via shooting to determine the initial co-states. The map that assigns a cost to the initial co-states is highly discontinuous as the number of resets is not fixed a priori. To see how the cost depends on the initial co-states $p_0 = [p_{x_1},p_{v_1},0,0]$, see Figs \ref{fig:spring_cost} and \ref{fig:spring_numer}.

\begin{figure}
	\centering
	\begin{subfigure}[t]{0.49\textwidth}
		\centering
		\includegraphics[width=\columnwidth]{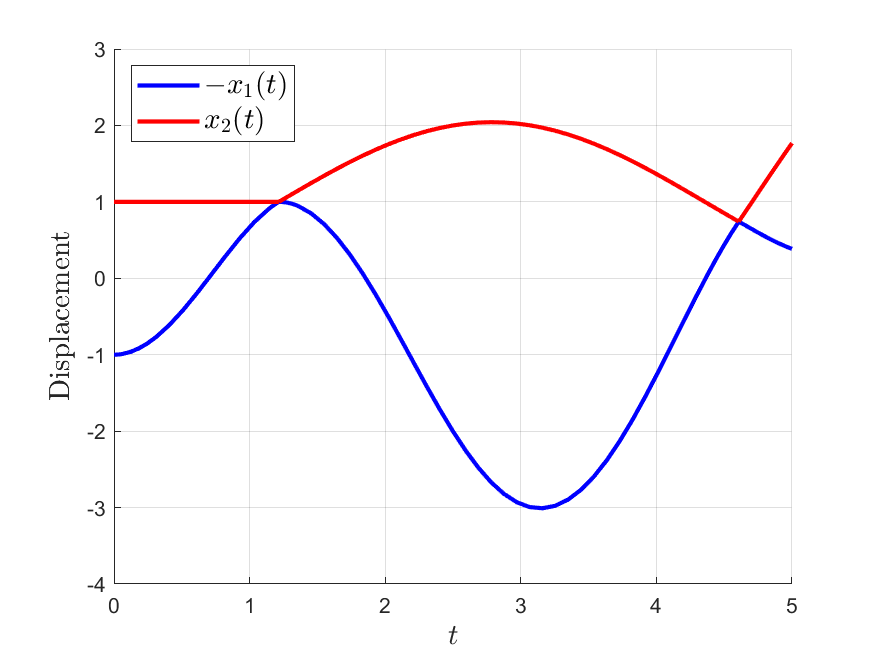}
		\caption{Computed trajectory for the double mass spring system in \S\ref{sec:dmss}.}
		\label{fig:spring_trajectory}
	\end{subfigure}
	\hfill
	\begin{subfigure}[t]{0.49\textwidth}
		\centering
		\includegraphics[width=\columnwidth]{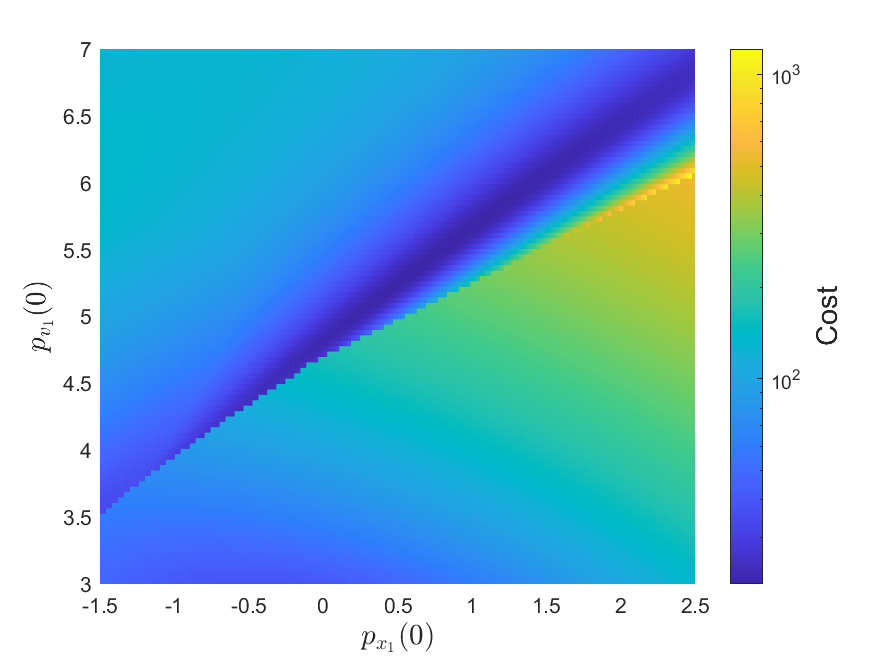}
		\caption{The cost as a function of the initial co-state value for the double mass spring system in \S\ref{sec:dmss}.}
		\label{fig:spring_cost}
	\end{subfigure}
	\\
	\begin{subfigure}[t]{0.49\textwidth}
		\includegraphics[width=\columnwidth]{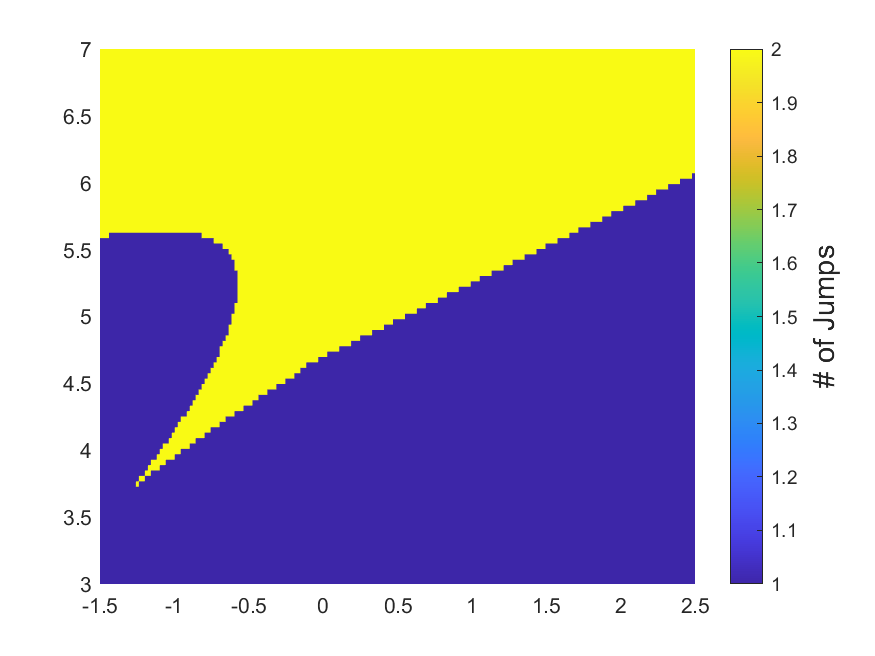}
		\caption{The number of resets that occur as a function of the initial co-state value for the double mass spring system in \S\ref{sec:dmss}.}
		\label{fig:spring_numer}
	\end{subfigure}
	\caption{Numerical results for the double mass spring system in \S\ref{sec:dmss}.}
\end{figure}

\subsection{Zeno Trajectories}\label{subsec:Zeno_ex}
Consider the following affine hybrid control system as a controlled version of Example \ref{ex:soZ}:
\begin{equation*}
	\begin{cases}
		\left. \begin{array}{l}
			\dot{x} = v \\
			\dot{v} = -1+u
		\end{array}\right\} & (x>0)\vee(x=0 \wedge v>0) \\
		\left. \begin{array}{l}
			x^+ = 0 \\
			v^+ = -0.49v^-
		\end{array}\right\} & (x=0 \wedge v<0)
	\end{cases}
\end{equation*}

	Let the cost be given by
	\begin{equation*}
		J = \int_0^{10} \, \frac{1}{2}u^2 \, dt + 10\left(x(10)-1\right)^2 + 10v(10)^2.
	\end{equation*}
	This is an affine quadratic regulator where Type II Zeno is possible and is expected for long enough time horizons as predicted by Corollary \ref{cor:Zeno_control}. Trajectories are found via dynamic programming with the discretization
	\begin{equation*}
		\begin{split}
			t &\in \texttt{linspace}(0,10,150), \\
			x &\in \texttt{linspace}(0,2,150), \\
			v &\in \texttt{linspace}(-2,2,150), \\
			u &\in \texttt{linspace}(-1,3,150).
		\end{split}
	\end{equation*}
	The computations are done by a search over all allowed controls which results in a long run-time of $\sim 18$ minutes. Indeed, a Zeno solution is found which lies outside of the predictions of the maximum principle as shown in Fig. \ref{fig:zeno_traj}. Additionally, the computed value function is constant on a region surrounding the origin which is in agreement with $J_{\text{Zeno}}(x_0,t_f)$ being constant, see Fig \ref{fig:zeno_value}.

Although this example is similar to the one presented in \cite{zeno_pmp}, the results here are computed via dynamic programming while those in \cite{zeno_pmp} are computed via shooting.

\begin{figure}
	\centering
	\begin{subfigure}[t]{0.49\textwidth}
		\centering
		\includegraphics[width=\columnwidth]{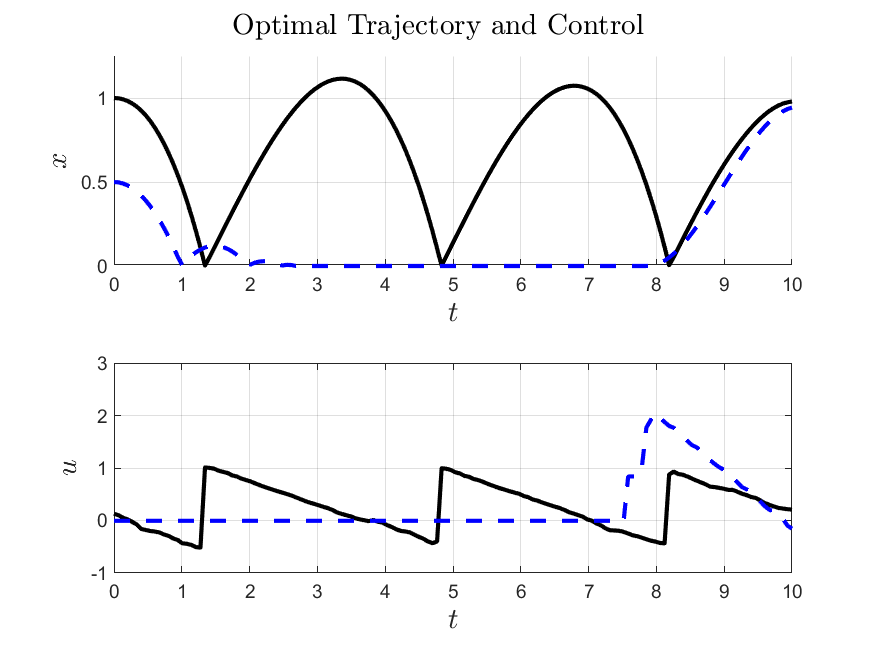}
		\caption{Two optimal trajectories for the problem in \S\ref{subsec:Zeno_ex}. The solid black curve has the initial conditions $(x(0),v(0))=(1,0)$ while the blue dashed curve has $(x(0),v(0)) = (0.5,0)$. The former admits a regular solution while the former is Zeno. }
		\label{fig:zeno_traj}
	\end{subfigure}
	\hfill
	\begin{subfigure}[t]{0.49\textwidth}
		\includegraphics[width=\columnwidth]{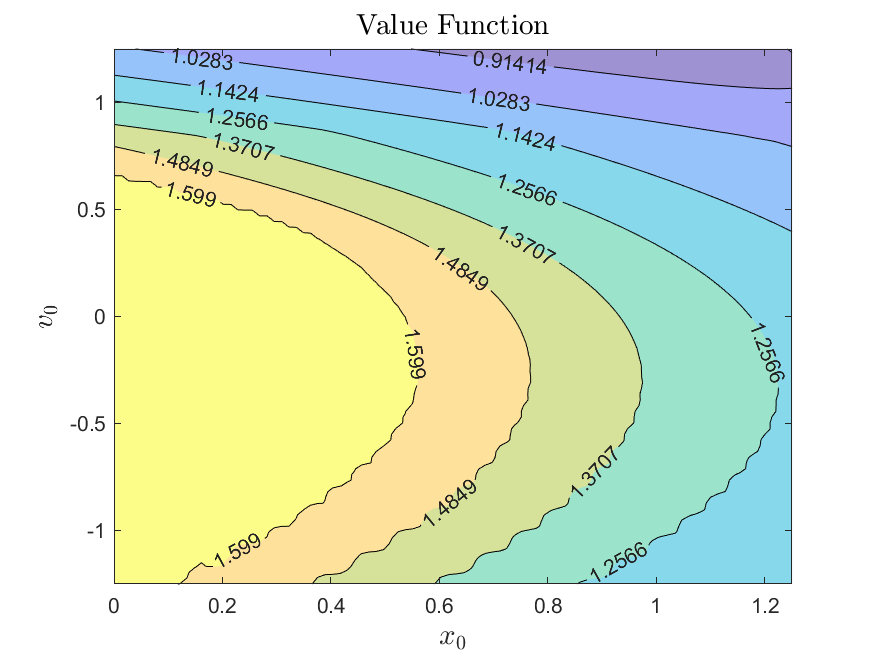}
		\caption{A contour plot for the value function corresponding to the problem in \S\ref{subsec:Zeno_ex}. The value function plateaus at the value of 1.599 which corresponds to the Zeno control strategy having constant cost.}
		\label{fig:zeno_value}
	\end{subfigure}
	\caption{Numerical results for \S\ref{subsec:Zeno_ex}. Notice that the control for the Zeno trajectory is only active over the end of the time interval in agreement with \eqref{eq:zeno_control}.}
\end{figure}

\section{Conclusions and Future Work}\label{sec:conclusion}
This work studied both the dynamics and control of affine hybrid dynamical systems. Notably, it was shown that affine hybrid dynamical systems posses exceptional arcs - beating/blocking always occur and Zeno is not uncommon. The existence of these exceptional arcs make the interpretation of the hybrid Pontryagin maximum principle difficult. Moreover, even in the regular case, the existence/uniqueness of the co-state jump condition is not straightforward. Although there exists a plethora of open issues, three principal topics are:

\begin{enumerate}
	\item Although optimal trajectories for linear/affine hybrid systems can be found by solving \eqref{eq:variational_corner}, reasonable numerical schemes need to be developed. In particular, techniques to solve for stabilizing solutions to \eqref{eq:triggered_riccati} for temporally triggered systems and \eqref{eq:space_jump} for spatially triggered systems. Moreover, although optimal trajectories with beating can be found by solving \eqref{eq:kbeating_boundary}, a reasonable numerical scheme needs to be developed.
	\item Although blocking and Zeno can be successfully excluded from linear systems, this is not the case for affine (and especially non-linear) systems. This leads to questions in both dynamics and controls.
	\begin{enumerate}
		\item From a dynamics viewpoint: How do the sets $\Sigma_\infty$ and $\Sigma_\infty^f$ behave and what is their structural stability? Here, $\Sigma_\infty^f$ is the invariant blocking set defined to be the nonlinear analog of $\Sigma_\infty^A$ in \eqref{eq:invariant_beating}.
		\item From a control viewpoint: The hybrid maximum principle breaks down when Zeno occurs which does occur in examples. A more general theory should be developed to handle this situation. One potential approach is to develop Zeno trajectories as weak solutions to the associated Hamilton-Jacobi-Bellman problem by examining the limit of $J_\varepsilon$ as $\varepsilon\to 0$.
	\end{enumerate}
	\item Strange behavior is seen when arcs terminate on the guard. The trajectory found in the strongly actuated case of Example \ref{ex:strong} with initial condition $(0.75,0.5)$ terminates on the guard with final state and co-state
		\begin{equation*}
			x_f = \begin{bmatrix}
				0.2872 \\ 8.9361\cdot 10^{-9}
			\end{bmatrix}, \quad p_f = \begin{bmatrix}
				0.2874 \\ -0.5438
			\end{bmatrix}.
		\end{equation*}
		The condition $p_f = Fx_f$ does not hold here.
\end{enumerate}
\section*{Acknowledgement}
The author greatly benefited from feedback by M. Ruth (Oden Institute). Comments from A. Bloch and M. Ghaffari (University of Michigan) additionally imporoved this work. This work was supported by AFOSR Award No. MURI FA9550-23-1-0400.

\bibliography{sn-bibliography}


\begin{thebibliography}{29}
\ifx \bisbn   \undefined \def \bisbn  #1{ISBN #1}\fi
\ifx \binits  \undefined \def \binits#1{#1}\fi
\ifx \bauthor  \undefined \def \bauthor#1{#1}\fi
\ifx \batitle  \undefined \def \batitle#1{#1}\fi
\ifx \bjtitle  \undefined \def \bjtitle#1{#1}\fi
\ifx \bvolume  \undefined \def \bvolume#1{\textbf{#1}}\fi
\ifx \byear  \undefined \def \byear#1{#1}\fi
\ifx \bissue  \undefined \def \bissue#1{#1}\fi
\ifx \bfpage  \undefined \def \bfpage#1{#1}\fi
\ifx \blpage  \undefined \def \blpage #1{#1}\fi
\ifx \burl  \undefined \def \burl#1{\textsf{#1}}\fi
\ifx \doiurl  \undefined \def \doiurl#1{\url{https://doi.org/#1}}\fi
\ifx \betal  \undefined \def \betal{\textit{et al.}}\fi
\ifx \binstitute  \undefined \def \binstitute#1{#1}\fi
\ifx \binstitutionaled  \undefined \def \binstitutionaled#1{#1}\fi
\ifx \bctitle  \undefined \def \bctitle#1{#1}\fi
\ifx \beditor  \undefined \def \beditor#1{#1}\fi
\ifx \bpublisher  \undefined \def \bpublisher#1{#1}\fi
\ifx \bbtitle  \undefined \def \bbtitle#1{#1}\fi
\ifx \bedition  \undefined \def \bedition#1{#1}\fi
\ifx \bseriesno  \undefined \def \bseriesno#1{#1}\fi
\ifx \blocation  \undefined \def \blocation#1{#1}\fi
\ifx \bsertitle  \undefined \def \bsertitle#1{#1}\fi
\ifx \bsnm \undefined \def \bsnm#1{#1}\fi
\ifx \bsuffix \undefined \def \bsuffix#1{#1}\fi
\ifx \bparticle \undefined \def \bparticle#1{#1}\fi
\ifx \barticle \undefined \def \barticle#1{#1}\fi
\bibcommenthead
\ifx \bconfdate \undefined \def \bconfdate #1{#1}\fi
\ifx \botherref \undefined \def \botherref #1{#1}\fi
\ifx \url \undefined \def \url#1{\textsf{#1}}\fi
\ifx \bchapter \undefined \def \bchapter#1{#1}\fi
\ifx \bbook \undefined \def \bbook#1{#1}\fi
\ifx \bcomment \undefined \def \bcomment#1{#1}\fi
\ifx \oauthor \undefined \def \oauthor#1{#1}\fi
\ifx \citeauthoryear \undefined \def \citeauthoryear#1{#1}\fi
\ifx \endbibitem  \undefined \def \endbibitem {}\fi
\ifx \bconflocation  \undefined \def \bconflocation#1{#1}\fi
\ifx \arxivurl  \undefined \def \arxivurl#1{\textsf{#1}}\fi
\csname PreBibitemsHook\endcsname

\bibitem[\protect\citeauthoryear{Kalman et~al.}{1960}]{kalman1960contributions}
\begin{barticle}
\bauthor{\bsnm{Kalman}, \binits{R.E.}}, \betal:
\batitle{Contributions to the theory of optimal control}.
\bjtitle{Bol. soc. mat. mexicana}
\bvolume{5}(\bissue{2}),
\bfpage{102}--\blpage{119}
(\byear{1960})
\end{barticle}
\endbibitem

\bibitem[\protect\citeauthoryear{Jurdjevic}{1996}]{jurdjevic}
\begin{bbook}
\bauthor{\bsnm{Jurdjevic}, \binits{V.}}:
\bbtitle{Geometric Control Theory}.
\bsertitle{Cambridge Series in Advanced Mathematics}.
\bpublisher{Cambridge University Press},
\blocation{Cambridge}
(\byear{1996})
\end{bbook}
\endbibitem

\bibitem[\protect\citeauthoryear{Li and Todorov}{2004}]{Li2004IterativeLQ}
\begin{bchapter}
\bauthor{\bsnm{Li}, \binits{W.}},
\bauthor{\bsnm{Todorov}, \binits{E.}}:
\bctitle{Iterative linear quadratic regulator design for nonlinear biological
  movement systems}.
In: \bbtitle{International Conference on Informatics in Control, Automation and
  Robotics}
(\byear{2004})
\end{bchapter}
\endbibitem

\bibitem[\protect\citeauthoryear{Goebel et~al.}{2009}]{hybrid_csm}
\begin{barticle}
\bauthor{\bsnm{Goebel}, \binits{R.}},
\bauthor{\bsnm{Sanfelice}, \binits{R.G.}},
\bauthor{\bsnm{Teel}, \binits{A.R.}}:
\batitle{Hybrid dynamical systems}.
\bjtitle{IEEE Control Systems Magazine}
\bvolume{29}(\bissue{2}),
\bfpage{28}--\blpage{93}
(\byear{2009})
\end{barticle}
\endbibitem

\bibitem[\protect\citeauthoryear{Johnson}{1994}]{simple_hybrid}
\begin{barticle}
\bauthor{\bsnm{Johnson}, \binits{S.D.}}:
\batitle{Simple hybrid systems}.
\bjtitle{International Journal of Bifurcation and Chaos}
\bvolume{04}(\bissue{06}),
\bfpage{1655}--\blpage{1665}
(\byear{1994})
\end{barticle}
\endbibitem

\bibitem[\protect\citeauthoryear{Dmitruk and Kaganovich}{2008}]{DMITRUK2008964}
\begin{barticle}
\bauthor{\bsnm{Dmitruk}, \binits{A.V.}},
\bauthor{\bsnm{Kaganovich}, \binits{A.M.}}:
\batitle{The hybrid maximum principle is a consequence of {P}ontryagin maximum
  principle}.
\bjtitle{Systems \& Control Letters}
\bvolume{57}(\bissue{11}),
\bfpage{964}--\blpage{970}
(\byear{2008})
\end{barticle}
\endbibitem

\bibitem[\protect\citeauthoryear{Liberzon}{2012}]{liberzon}
\begin{bbook}
\bauthor{\bsnm{Liberzon}, \binits{D.}}:
\bbtitle{Calculus of Variations and Optimal Control Theory: A Concise
  Introduction}.
\bpublisher{Princeton University Press},
\blocation{Princeton}
(\byear{2012})
\end{bbook}
\endbibitem

\bibitem[\protect\citeauthoryear{Shaikh and Caines}{2007}]{4303244}
\begin{barticle}
\bauthor{\bsnm{Shaikh}, \binits{M.S.}},
\bauthor{\bsnm{Caines}, \binits{P.E.}}:
\batitle{On the hybrid optimal control problem: Theory and algorithms}.
\bjtitle{IEEE Transactions on Automatic Control}
\bvolume{52}(\bissue{9}),
\bfpage{1587}--\blpage{1603}
(\byear{2007})
\end{barticle}
\endbibitem

\bibitem[\protect\citeauthoryear{Garavello and Piccoli}{2005}]{gp_hnp}
\begin{bchapter}
\bauthor{\bsnm{Garavello}, \binits{M.}},
\bauthor{\bsnm{Piccoli}, \binits{B.}}:
\bctitle{Hybrid necessary principle}.
In: \bbtitle{Proceedings of the 44th IEEE Conference on Decision and Control},
pp. \bfpage{723}--\blpage{728}
(\byear{2005})
\end{bchapter}
\endbibitem

\bibitem[\protect\citeauthoryear{Possieri et~al.}{2020}]{periodic_hLQR}
\begin{barticle}
\bauthor{\bsnm{Possieri}, \binits{C.}},
\bauthor{\bsnm{Sassano}, \binits{M.}},
\bauthor{\bsnm{Galeani}, \binits{S.}},
\bauthor{\bsnm{Teel}, \binits{A.R.}}:
\batitle{The linear quadratic regulator for periodic hybrid systems}.
\bjtitle{Automatica}
\bvolume{113},
\bfpage{108772}
(\byear{2020})
\end{barticle}
\endbibitem

\bibitem[\protect\citeauthoryear{Riedinger et~al.}{1999}]{hLQR_1999}
\begin{bchapter}
\bauthor{\bsnm{Riedinger}, \binits{P.}},
\bauthor{\bsnm{Kratz}, \binits{F.}},
\bauthor{\bsnm{Iung}, \binits{C.}},
\bauthor{\bsnm{Zanne}, \binits{C.}}:
\bctitle{Linear quadratic optimization for hybrid systems}.
In: \bbtitle{Proceedings of the 38th IEEE Conference on Decision and Control},
vol. \bseriesno{3},
pp. \bfpage{3059}--\blpage{30643}
(\byear{1999})
\end{bchapter}
\endbibitem

\bibitem[\protect\citeauthoryear{Kong et~al.}{2021}]{ihLQR_council}
\begin{bchapter}
\bauthor{\bsnm{Kong}, \binits{N.J.}},
\bauthor{\bsnm{Council}, \binits{G.}},
\bauthor{\bsnm{Johnson}, \binits{A.M.}}:
\bctitle{i{LQR} for piecewise-smooth hybrid dynamical systems}.
In: \bbtitle{Proceedings of the 60th IEEE Conference on Decision and Control},
pp. \bfpage{5374}--\blpage{5381}
(\byear{2021})
\end{bchapter}
\endbibitem

\bibitem[\protect\citeauthoryear{Clark and Oprea}{2023}]{zeno_pmp}
\begin{bchapter}
\bauthor{\bsnm{Clark}, \binits{W.}},
\bauthor{\bsnm{Oprea}, \binits{M.}}:
\bctitle{Optimality of {Z}eno executions in hybrid systems}.
In: \bbtitle{2023 American Control Conference (ACC)},
pp. \bfpage{3983}--\blpage{3988}
(\byear{2023})
\end{bchapter}
\endbibitem

\bibitem[\protect\citeauthoryear{Goodman and Colombo}{2020}]{goodman_poincare}
\begin{barticle}
\bauthor{\bsnm{Goodman}, \binits{J.R.}},
\bauthor{\bsnm{Colombo}, \binits{L.J.}}:
\batitle{On the existence and uniqueness of {P}oincaré maps for systems with
  impulse effects}.
\bjtitle{IEEE Transactions on Automatic Control}
\bvolume{65}(\bissue{4}),
\bfpage{1815}--\blpage{1821}
(\byear{2020})
\end{barticle}
\endbibitem

\bibitem[\protect\citeauthoryear{Grizzle et~al.}{2014}]{GRIZZLE20141955}
\begin{barticle}
\bauthor{\bsnm{Grizzle}, \binits{J.W.}},
\bauthor{\bsnm{Chevallereau}, \binits{C.}},
\bauthor{\bsnm{Sinnet}, \binits{R.W.}},
\bauthor{\bsnm{Ames}, \binits{A.D.}}:
\batitle{Models, feedback control, and open problems of 3d bipedal robotic
  walking}.
\bjtitle{Automatica}
\bvolume{50}(\bissue{8}),
\bfpage{1955}--\blpage{1988}
(\byear{2014})
\end{barticle}
\endbibitem

\bibitem[\protect\citeauthoryear{Or and Ames}{2011}]{stability_zeno}
\begin{barticle}
\bauthor{\bsnm{Or}, \binits{Y.}},
\bauthor{\bsnm{Ames}, \binits{A.D.}}:
\batitle{Stability and completion of {Z}eno equilibria in {L}agrangian hybrid
  systems}.
\bjtitle{IEEE Transactions on Automatic Control}
\bvolume{56}(\bissue{6}),
\bfpage{1322}--\blpage{1336}
(\byear{2011})
\end{barticle}
\endbibitem

\bibitem[\protect\citeauthoryear{Ames et~al.}{2006}]{life_zeno}
\begin{bchapter}
\bauthor{\bsnm{Ames}, \binits{A.D.}},
\bauthor{\bsnm{Zheng}, \binits{H.}},
\bauthor{\bsnm{Gregg}, \binits{R.D.}},
\bauthor{\bsnm{Sastry}, \binits{S.}}:
\bctitle{Is there life after {Z}eno? taking executions past the breaking
  ({Z}eno) point}.
In: \bbtitle{2006 American Control Conference (ACC)},
p. \bfpage{6}
(\byear{2006})
\end{bchapter}
\endbibitem

\bibitem[\protect\citeauthoryear{Goebel et~al.}{2012}]{hybrid_gst}
\begin{bbook}
\bauthor{\bsnm{Goebel}, \binits{R.}},
\bauthor{\bsnm{Sanfelice}, \binits{R.G.}},
\bauthor{\bsnm{Teel}, \binits{A.R.}}:
\bbtitle{Hybrid Dynamical Systems: Modeling, Stability, and Robustness}.
\bpublisher{Princeton University Press},
\blocation{Princeton}
(\byear{2012})
\end{bbook}
\endbibitem

\bibitem[\protect\citeauthoryear{Ames et~al.}{2005}]{chattering}
\begin{bchapter}
\bauthor{\bsnm{Ames}, \binits{A.D.}},
\bauthor{\bsnm{Abate}, \binits{A.}},
\bauthor{\bsnm{Sastry}, \binits{S.}}:
\bctitle{Sufficient conditions for the existence of {Z}eno behavior}.
In: \bbtitle{Proceedings of the 44th IEEE Conference on Decision and Control},
pp. \bfpage{696}--\blpage{701}
(\byear{2005})
\end{bchapter}
\endbibitem

\bibitem[\protect\citeauthoryear{Dashkovskiy and Feketa}{2017}]{extend_zeno}
\begin{barticle}
\bauthor{\bsnm{Dashkovskiy}, \binits{S.}},
\bauthor{\bsnm{Feketa}, \binits{P.}}:
\batitle{Prolongation and stability of {Z}eno solutions to hybrid dynamical
  systems}.
\bjtitle{IFAC-PapersOnLine}
\bvolume{50}(\bissue{1}),
\bfpage{3429}--\blpage{3434}
(\byear{2017}).
\bcomment{20th IFAC World Congress}
\end{barticle}
\endbibitem

\bibitem[\protect\citeauthoryear{Pakniyat and Caines}{2024}]{minimum_hybrid}
\begin{barticle}
\bauthor{\bsnm{Pakniyat}, \binits{A.}},
\bauthor{\bsnm{Caines}, \binits{P.E.}}:
\batitle{The minimum principle of hybrid optimal control theory}.
\bjtitle{Math. Control Signals Syst.}
\bvolume{36},
\bfpage{21}--\blpage{70}
(\byear{2024})
\end{barticle}
\endbibitem

\bibitem[\protect\citeauthoryear{Kirk}{1998}]{kirk}
\begin{bbook}
\bauthor{\bsnm{Kirk}, \binits{D.E.}}:
\bbtitle{Optimal Control Theory: An Introduction}.
\bsertitle{Dover Books on Electrical Engineering}.
\bpublisher{Dover Publications},
\blocation{New York}
(\byear{1998})
\end{bbook}
\endbibitem

\bibitem[\protect\citeauthoryear{Sussman}{1999}]{sussman}
\begin{bchapter}
\bauthor{\bsnm{Sussman}, \binits{H.J.}}:
\bctitle{A maximum principle for hybrid optimal control problems}.
In: \bbtitle{Proceesings of the 38th IEEE Conference on Decision and Control},
vol. \bseriesno{1},
pp. \bfpage{425}--\blpage{4301}
(\byear{1999})
\end{bchapter}
\endbibitem

\bibitem[\protect\citeauthoryear{Possieri and Teel}{2017}]{stable_hybrid}
\begin{barticle}
\bauthor{\bsnm{Possieri}, \binits{C.}},
\bauthor{\bsnm{Teel}, \binits{A.R.}}:
\batitle{Structural properties of a class of linear hybrid systems and output
  feedback stabilization}.
\bjtitle{IEEE Transations on Automatic Control}
\bvolume{62}(\bissue{6}),
\bfpage{2704}--\blpage{2719}
(\byear{2017})
\end{barticle}
\endbibitem

\bibitem[\protect\citeauthoryear{Perko}{2001}]{perko}
\begin{bbook}
\bauthor{\bsnm{Perko}, \binits{L.}}:
\bbtitle{Differential Equations and Dynamical Systems}.
\bsertitle{Texts in Applied Mathematics}.
\bpublisher{Springer},
\blocation{New York}
(\byear{2001})
\end{bbook}
\endbibitem

\bibitem[\protect\citeauthoryear{Clark et~al.}{2021}]{clarkopreagraven}
\begin{botherref}
\oauthor{\bsnm{Clark}, \binits{W.}},
\oauthor{\bsnm{Oprea}, \binits{M.}},
\oauthor{\bsnm{Graven}, \binits{A.J.}}:
A Geometric Approach to Optimal Control of Hybrid and Impulsive Systems.
Preprint at \url{https://arxiv.org/abs/2111.11645}
(2021)
\end{botherref}
\endbibitem

\bibitem[\protect\citeauthoryear{Clark et~al.}{2024}]{submersive_resets}
\begin{barticle}
\bauthor{\bsnm{Clark}, \binits{W.A.}},
\bauthor{\bsnm{Oprea}, \binits{M.}},
\bauthor{\bsnm{Shaw}, \binits{A.}}:
\batitle{Optimal control of hybrid systems with submersive resets}.
\bjtitle{IFAC-PapersOnLine}
\bvolume{58}(\bissue{6}),
\bfpage{89}--\blpage{94}
(\byear{2024}).
\bcomment{8th IFAC Workshop on Lagrangian and Hamiltonian Methods for Nonlinear
  Control LHMNC 2024}
\end{barticle}
\endbibitem

\bibitem[\protect\citeauthoryear{Simic et~al.}{2001}]{hybrid_sstable}
\begin{bchapter}
\bauthor{\bsnm{Simic}, \binits{S.N.}},
\bauthor{\bsnm{Johansson}, \binits{K.H.}},
\bauthor{\bsnm{Lygeros}, \binits{J.}},
\bauthor{\bsnm{Sastry}, \binits{S.}}:
\bctitle{Structural stability of hybrid systems}.
In: \bbtitle{2001 European Control Conference (ECC)},
p. \bfpage{385803863}
(\byear{2001})
\end{bchapter}
\endbibitem

\bibitem[\protect\citeauthoryear{Clark and Bloch}{2018}]{foot_slip}
\begin{bchapter}
\bauthor{\bsnm{Clark}, \binits{W.}},
\bauthor{\bsnm{Bloch}, \binits{A.}}:
\bctitle{Stable orbits for a simple passive walker experiencing foot slip}.
In: \bbtitle{Proceedings of the 57th IEEE Conference on Decision and Control},
pp. \bfpage{2366}--\blpage{2371}
(\byear{2018})
\end{bchapter}
\endbibitem

\end{thebibliography}

\end{document}